\documentclass[reqno]{amsart}
\usepackage[english]{babel}
\usepackage{amscd,amssymb,amsmath,amsfonts,latexsym,amsthm}
\usepackage{inputenc}
\usepackage{graphicx}
\usepackage{hyperref,amsthm,amsmath,amssymb,color,multirow,graphicx}
\usepackage[fontsize=11pt]{scrextend}

\newtheorem{thm}{Theorem}[section]

\newtheorem{lem}[thm]{Lemma}

\theoremstyle{definition}
\newtheorem{defn}[thm]{Definition}

\numberwithin{equation}{section}

\newcommand{\be}{\begin{equation}}

\newcommand{\ee}{\end{equation}}

\begin{document}

\sloppy

\begin{flushright}
	\begin{tabular}{ll}
		\textsf{} &  \\
		\textsf{\ \pageref{firstpage}-\pageref{lastpage}}\\
		{}\\
\end{tabular}
\end{flushright}

\begin{center}
\textbf{\large CAUCHY PROBLEM FOR  THE TIME-FRACTIONAL GENERALIZED KURAMOTO-SIVASHINSKY EQUATION}\\

 \vspace{5mm}

{\small
\begin{tabular}{p{9cm}}
    {\bf Ashurov  R.R.}\\
    V.I. Romanovskiy Institute of Mathematics,\\
   Uzbekistan Academy of Sciences,\\
  Engineering School, Central Asian University,  Tashkent, 111221, Uzbekistan.\\
   ashurovr@gmail.com\\
    \\[5mm]
    {\bf Sobirov Z.A.} \\
   National University of Uzbekistan, Tashkent, Uzbekistan,\\
   V.I. Romanovskiy Institute of Mathematics,\\
   Uzbekistan Academy of Sciences.\\
   sobirovzar@gmail.com\\
   \\[5mm]
    {\bf Norkulova R.B.}\\
   V.I. Romanovskiy Institute of Mathematics, \\
   Uzbekistan Academy of Sciences.\\
   norkulovarushana1988@gmail.com\\
    
\end{tabular}
}
\end{center}
\vspace{5mm}

\textbf{Abstract.} This paper studies global solvability of the Cauchy problem for a generalized time-fractional Kuramoto-Sivashinsky equation in the Shwartz space, which is a complete topological space generated by a family of semi-norms. The main approach is based on separating the linear and nonlinear parts of the equation and applying appropriate analytical methods to each of them. The linear part of the equation is analyzed using the Fourier transform. The nonlinear equation is treated by the method of successive approximations, and uniform estimates for the constructed sequence are derived. Furthermore, taking into account the topological structure of the Schwartz space, the convergence of the sequence in the sense of semi-norms is rigorously established. The results provide a rigorous analytical framework for fractional Kuramoto-Sivashinsky type equations in topological function spaces.

\textbf{Keywords:} Time-fractional Kuramoto-Sivashinsky equation; Caputo fractional derivative; nonlinear dispersive equations; existence and uniqueness; 

\textbf{MSC (2020): 35B65, 35G25, 35K30, 35R11.}

\makeatletter
\renewcommand{\@evenhead}{\vbox{\thepage \hfil {\it  Ashurov R.R., Sobirov Z.A., Norkulova R.B.}  \hrule }}
\renewcommand{\@oddhead}{\vbox{\hfill
{\it  Ashurov R.R., Sobirov Z.A., Norkulova R.B. }\hfill
\thepage \hrule}} \makeatother

\label{firstpage}

\section{Introduction}
Nonlinear evolution equations play a fundamental role in the mathematical modeling of complex physical, chemical, and engineering processes (\cite{QASEM},\cite{Benlahsen},\cite{Kawahara}). One of the most extensively studied models are the Korteweg-de Vries equation, the Kuramoto-Sivashinsky equation, and the Kawahara equation. These equations arise in the study of nonlinear wave propagation, instability mechanisms, turbulence, and pattern formation in fluid dynamics, plasma physics, and other dissipative media (\cite{ElWakil2010},\cite{Kruslov},\cite{Clifford}).
\par Among such equations, the Kuramoto-Sivashinsky equation, originally introduced independently by Yoshiki Kuramoto and Gregory Sivashinsky, occupies a central position in the theory of nonlinear dissipative systems. This equation arises in various applications, including plasma physics, flame front propagation, thin film dynamics, reaction-diffusion systems, and fluid instability theory \cite{Sivashinsky}. In its classical form, the Kuramoto-Sivashinsky equation contains a fourth-order dissipative term, a second-order destabilizing term, and a nonlinear convective term. Due to its rich mathematical structure and important physical applications, this equation has been extensively studied from the point of view of existence, uniqueness, stability, and asymptotic behavior of solutions in various functional spaces.

Especially, R.Adams \cite{Adams} investigated the well-posedness of a generalized Kuramoto-Sivashinsky equation and established the existence, uniqiueness, and stability of solutions in appropriate functional spaces.

Furthermore, the works of G.M.Coclite and L.Di.Ruvo \cite{Coclite} analyze the existence of classical solutions and examine the analytical properties of the Kuramoto-Sivashinsky equation in the presence of anisotropic effects.
Moreover, several numerical approaches have been proposed for efficiently solving the Kuramoto-Sivashinsky equation. The extrapolated collocation algorithm proposed Shallu and V.K.Kukreja \cite{Shallu} provide an efficient numerical technique capable of computing accurate approximations of the solutions.  

In recent years, fractional calculus has become an important tool for modeling memory and hereditary properties of complex media. Fractional derivatives provide a more accurate description of anomalous diffusion, viscoelasticity, and nonlocal transport processes than classical integer-order derivatives.
\par In particular, the Caputo fractional derivative is widely used in applied problems because it allows the use of physical-
meaningful initial conditions. The introduction of fractional time derivatives into nonlinear evolution equations leads to new mathematical models that more adequately describe real-world phenomena with memory effects. The fractional KS equation becomes particularly applicable for fluid mechanics along with plasma physics and material science because of its essential features (\cite{Shah},\cite{Kuramoto},\cite{SIVASHINSKY}). The fractional generalization of the Kuramoto-Sivashinsky equation has attracted increasing attention in recent studies. The presence of the fractional time derivative significantly affects the qualitative properties of the solution, including its regularity, stability, and long-time dynamics. One of the most natural functional settings for studying such problems on the whole real line is the Shwartz space. 
This space consists of infinitely differentiable rapidly decreasing functions and has excellent properties with respect to the Fourier transform.
In particular, the Fourier transform is an isomorphism of the Schwartz space on  itself, which makes it an effective tool for analyzing linear and nonlinear partial differential equations with constant coefficients.

 The main objective of this work is to prove the existence and uniqueness of the solution in Shwartz space. We investigate the Cauchy problem for a nonlinear time-fractional generalized Kuramoto-Sivashinsky equation  involving the Caputo derivative
\begin{equation}\label{1-tenglama}\partial _{0t}^{\beta }u+a^2u_{xxxx}+bu_{xxx}+cu_{xx}+du_x+ku+\gamma uu_x=f(x,t), \quad x\in \mathbb{R},\quad 0<   t\le T,\end{equation}		
with the initial condition 
\begin{equation}\label{1-tenhglama IC}u(x,0)=\varphi(x), \quad x\in \mathbb{R}, \end{equation} 
where $0<\beta <1$, $a > 0, b, c, d, k$ and $\gamma$ are real constants.

Using the Fourier transform method and properties of the Caputo fractional derivative, we obtain an explicit integral representation of the solution of the corresponding linear equation in terms of Mittag-Leffler functions.
 Using a priori estimates with respect to infinity number of semi-norms, we prove the convergence and existence of the solution. Using energy estimates, we show the uniqueness of the solution. Main results of the research given in theorems \ref{nonlinear}, \ref{Global}, and \ref{uniqueness}.

Despite the fact that the time-fractional Kuramoto-Sivashinsky equation has important applications in fluid dynamics, plasma physics, and other contexts where wave propagation exhibits anomalous diffusion or dispersion, it remains less studied. The results in this area are mostly related to numerical analysis, finding particular solutions. Veeresha and Prakasha applied the q-homotopy analysis transform method to obtained the analytical solutions of the Kuramoto-Sivashinsky \cite{Veeresha}.R.Choudhary et al investigated a higher-order stable numerical method for the time-fractional Kuramoto-Sivashinsky equation using the Caputo fractional derivative and quintic spline discretization \cite{Chouldhary}. Hossaine et al studied the variable-order time-fractional 2D Kuramoto-Sivashinsky equation and developed a semidiscrete method based on 2D Chebyshev cardinal functions to solve it, whose accuracy was verified through three numerical examples \cite{Hossein}. Sahoo and Ray obtained  new exact solutions of the Kuramoto-Sivashinsky equation \cite{SAHOO}. Aychluh and Ayalew studied the nonlinear time-fractional Kuramoto-Sivashinsky equation using the fractional power series method and showed that this approach provides accurate, stable, and efficient numerical solutions confirmed by MATLAB simulations \cite{MINILIK}. In Ouhadan's paper, the exact solutions of the modified nonlinear time-fractional Kuramoto- Sivashinsky equation were constructed using the invariant subspace method and solved by the Laplace transform with Mittag-Leffler functions \cite{OUHADAN}. Recent advances in numerical analysis employ compact finite difference schemes on graded meshes in conjunction with quantic B-spline-based methods \cite{Wang}. There are almost no results on the well-posedness and qualitative properties of solutions to such an equation.  Here we cite \cite{Richard} where using ﬁxed-point theorem the authors proved the existence and uniqueness of  solution to the fractional Kuramoto-Sivashinsky equation with Atangana-Baleanu fractional derivative in Riemann-Liouville sense.

\section{PRELIMINARIES}
This section presents attendant lemmas and the necessary definitions used in this work.
\begin{defn}\label{1def3}(\cite{John},\cite{Elias},p.135) The Schwartz space $S({{\mathbb R}})$ is the topological vector space of functions $g:{{\mathbb R}}\to \mathbb R $ such that $g\in {{C}^{\infty }}({{\mathbb R}})$ and
\begin{equation}\label{S(R)}\sup_{x\in \mathbb R}{|x|^{k}}\left|\frac{d^n g(x)}{dx^n}\right|<\infty , \end{equation}	
for every  $k,n\in \mathbb N\cup \{0\}$.\end{defn}

The Schwartz space is a natural one to use for the Fourier transform. Differentiations and multiplication exchange roles under the Fourier transform due to the properties of smoothness and rapid decrease. As a result, the Fourier transform is an automorphism of the Schwartz space.

\begin{defn}(\cite{John},\cite{Elias})
The Fourier transform of a function $f\in L^1({\mathbb R})$ is a function $\hat{f}:{\mathbb R}\to \mathbb C$ defined by  
\begin{equation}\hat{f}(\lambda )\equiv \mathbf F[f](\lambda):=\frac{1}{\sqrt{2\pi}}\int\limits_{-\infty }^{+\infty }{f(x){{e}^{-i\lambda x}}dx}. \end{equation}		
The inverse Fourier transform defined by 
\begin{equation}f(x)\equiv \mathbf F^{-1}[\hat f](x):=\frac{1}{\sqrt{2\pi} }\int\limits_{-\infty }^{+\infty }{\hat{f}(\lambda ){{e}^{i\lambda x}}d\lambda, }\end{equation} provided the right-hand side exists.

We  use $x$ to denote the spatial variable and $\lambda $  the variable in the Fourier transform.
\end{defn}
The Riemann-Liouville fractional integral of order $\beta>0$, of a function $g$ is determined by (see \cite{Kilbas2006}, p. 69)
$$I_{0t}^\beta g(t)=\frac{1}{\Gamma(\beta)}\int\limits_{0}^{t}\frac{g(\xi)}{(t-\xi)^{1-\beta}}d\xi, \quad t>0,$$
provided that the integral on the right-hand side of equality exists.

If $f\in L^p(0,T)$, $1\leq p\leq +\infty$, then (see Lemma 2.3 in \cite{Kilbas2006})
$$(I_{0t}^{\alpha}I_{0t}^{\beta} f ) (t)=(I_{0t}^{\alpha+\beta}f)(t).$$

For $0<\beta < 1$ an operator defined by  
\begin{equation}\label{Caputo}\partial_{0 t}^{\beta}g(t)=\frac{d}{dt}I_{0t}^{1-\beta}(g(t)-g(0))=\frac{1}{\Gamma(1-\beta)}\frac{d}{dt}\int\limits_{0}^{t}{\frac{g(\xi)-g(0)}{{{(t-\xi )}^{\beta}}}d\xi },\ \  t>0, \end{equation}		
is called the Caputo fractional derivative, where $\Gamma(\beta)=\int\limits_{0}^{+\infty}x^{\beta-1}e^{-x}dx$ 
is  Euler's Gamma function. Here we suppose that the right-hand side of the equality \eqref{Caputo} exists.

For absolute continuous functions on $[0,T]$ one has (\cite{Kilbas2006})
\begin{equation}\label{Caputo2}\partial_{0 t}^{\beta}g(t)=I_{0t}^{1-\beta}\frac{d}{dt}g(t)=\frac{1}{\Gamma(1-\beta)}\int\limits_{0}^{t}{\frac{g^\prime(\xi)}{{{(t-\xi )}^{\beta}}}d\xi },\ \  t>0. \nonumber \end{equation}		

Next, we introduce the two-parameter function of the Mittag-Leffler type, which plays a very important role in the fractional calculus.

\begin{defn}(\cite{I.Podlubniy},\cite{Rudolf}) For $0<\alpha<1$, a two-parameter function of the Mittag-Leffler type is defined by the series expansion
\begin{equation}
{{E}_{\alpha ,\beta  }(z)}=\sum\limits_{k=0}^{\infty }{\frac{{{z}^{k}}}{\Gamma (\alpha k+\beta )}}, \quad		 z \in \mathbb{C}.
\end{equation}		
\end{defn}
If $\beta= 1$, then the Mittag-Leffler function is called the one-parameter or classical Mittag-Leffler
function and it is denoted by $E_\alpha(z) = E_{\alpha,1}(z)$.
\begin{thm}(\cite{Elias},p.137)
 \label{thm1}
The Fourier transform and its inverse map the space $S$ on to itself in a one-to-one, linear, and continuous manner. \end{thm}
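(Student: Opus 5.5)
We prove that the Fourier transform $\mathbf F$ maps $S(\mathbb R)$ into itself, is linear and continuous in the semi-norm topology, and that $\mathbf F^{-1}\mathbf F=\mathbf F\mathbf F^{-1}=\mathrm{Id}$ on $S(\mathbb R)$. Linearity is immediate from the integral definition. The argument for $\mathbf F^{-1}$ is identical, since it differs from $\mathbf F$ only by the sign of the exponent, so we concentrate on $\mathbf F$.

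\emph{Step 1: $\mathbf F$ maps $S$ into $S$.} For $f\in S$, the bound $|f(x)|\le C_N(1+|x|)^{-N}$ shows that $x^m f\in L^1$ for every $m$. Differentiation under the integral sign is therefore justified and gives
$$\frac{d^n}{d\lambda^n}\hat f(\lambda)=\mathbf F[(-ix)^n f](\lambda).$$
Integrating by parts $k$ times, with boundary terms vanishing by rapid decay, gives $(i\lambda)^k\mathbf F[g]=\mathbf F[g^{(k)}]$ for $g\in S$. Combining the two identities yields
$$\lambda^k \frac{d^n}{d\lambda^n}\hat f(\lambda)=(-i)^{k}(-i)^n\,\mathbf F\Big[\frac{d^k}{dx^k}\big(x^n f(x)\big)\Big](\lambda).$$
The right-hand side is bounded by $(2\pi)^{-1/2}\|(x^nf)^{(k)}\|_{L^1}$. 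We then use
$$\|h\|_{L^1}\le \pi\sup_x (1+x^2)|h(x)|,$$
and expand $(x^nf)^{(k)}$ by the Leibniz rule. This gives a finite sum of semi-norms of $f$ in \eqref{S(R)}:
$$\sup_\lambda |\lambda|^k|\hat f^{(n)}(\lambda)|\le C_{k,n}\sum_{k'\le n+2,\ n'\le k}\ \sup_x |x|^{k'}|f^{(n')}(x)|.$$
Hence $\hat f\in S$.

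\emph{Step 2: continuity.} The estimate of Step 1 controls each semi-norm of $\hat f$ by finitely many semi-norms of $f$, which is exactly continuity of a linear map between Fréchet spaces defined by semi-norms. The same estimate holds for $\mathbf F^{-1}$.

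\emph{Step 3: inversion formula, which gives bijectivity.} This is the main step. Take the Gaussian $G(x)=e^{-x^2/2}$ and compute $\hat G=G$: both $\hat G$ and $G$ satisfy $y'=-\lambda y$ (by the identities of Step 1), and $\hat G(0)=1$. Next, by Fubini, the multiplication formula
$$\int \hat f\,g=\int f\,\hat g$$
holds for $f,g\in S$. Apply it with $g(\lambda)=e^{i\lambda x}G(\varepsilon\lambda)$, whose Fourier transform is a translated and dilated Gaussian $\varepsilon^{-1}G((\cdot-x)/\varepsilon)$. This gives
$$\frac{1}{\sqrt{2\pi}}\int \hat f(\lambda)e^{i\lambda x}G(\varepsilon\lambda)\,d\lambda=\frac{1}{\sqrt{2\pi}}\int f(x+\varepsilon y)G(y)\,dy.$$
Let $\varepsilon\to0$, using dominated convergence on both sides and $\int G=\sqrt{2\pi}$. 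The result is $\mathbf F^{-1}\mathbf F f=f$. Symmetrically, $\mathbf F\mathbf F^{-1}f=f$. Therefore $\mathbf F$ is injective and onto $S$, with continuous inverse $\mathbf F^{-1}$.

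The delicate points are the bookkeeping of the dilation and translation identities for the Gaussian and the justification of the limit $\varepsilon\to0$. Both follow from the rapid decay of $f$ and $\hat f$ established in Step 1.
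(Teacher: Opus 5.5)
Your proof is correct and follows the standard textbook argument behind the cited reference; the paper itself gives no proof beyond the citation. That argument consists of the interchange of differentiation and multiplication under $\mathbf F$, the self-reciprocal Gaussian obtained from $y'=-\lambda y$, the multiplication formula, and an approximate-identity limit, with your semi-norm estimate supplying the continuity. One caveat: your argument rightly works in the complex-valued Schwartz space (for example, $g(\lambda)=e^{i\lambda x}G(\varepsilon\lambda)$ and $\hat f$ itself are complex), whereas the paper's Definition~\ref{1def3} takes $g:\mathbb R\to\mathbb R$, and under that definition the statement fails, since $\mathbf F[xe^{-x^2/2}](\lambda)=-i\lambda e^{-\lambda^2/2}$.
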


 \begin{thm}(\cite{Rudolf},p.58, \cite{Haubold}) \label{thmrudolf} If $0<\alpha <1$ and $\mu $ is a real number such that 
\begin{equation}\label{mu}\frac{\pi \alpha }{2}<\mu <\min \{\pi ,\pi \alpha \}. \end{equation}
Then we have the following asymptotic expansion:
\begin{equation}{{E}_{\alpha ,\beta }}(z)=-\sum\limits_{r=1}^{n}{\frac{1}{\Gamma (\beta -\alpha r)}\frac{1}{{{z}^{r}}}}+O\left[ \frac{1}{{{z}^{n+1}}} \right], \end{equation}
as $\left| z \right|\to \infty,\ \mu \le \left| \arg z \right|\le \pi$, where $\beta\in \mathbb{C}$, $n\in \mathbb N$. 

The following differentiation formula is an immediate consequence of the definition  of the two-parametric Mittag-Leffler function.
\begin{equation}\label{hosila}{{\left(\frac{d}{dz}\right)}^{m}}[{{z}^{\beta -1}}{{E}_{\alpha ,\beta }}
(z^\alpha )]=z^{\beta-m-1}{E_{\alpha ,\beta -m}}({{z}^{\alpha }}),\quad m\ge 1. \end{equation}
\end{thm}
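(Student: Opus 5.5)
The theorem has two parts of very different depth, and I would treat them separately. The asymptotic expansion is a classical result quoted from \cite{Rudolf}, and I would reproduce the standard contour-integral argument. The differentiation formula \eqref{hosila} is elementary: it follows by termwise differentiation of the power series.

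\emph{Differentiation formula.} For $z>0$ (or on any simply connected domain avoiding the branch cut of $z^{\beta-1}$) I would write
$$z^{\beta-1}E_{\alpha,\beta}(z^\alpha)=\sum_{k=0}^\infty \frac{z^{\alpha k+\beta-1}}{\Gamma(\alpha k+\beta)}.$$
By Stirling's formula $1/\Gamma(\alpha k+\beta)$ decays superexponentially. Hence both this series and its formally differentiated series converge locally uniformly, so termwise differentiation is legitimate. Using
$$\Big(\frac{d}{dz}\Big)^m z^{s}=\frac{\Gamma(s+1)}{\Gamma(s+1-m)}\,z^{s-m},\qquad s=\alpha k+\beta-1,$$
each term becomes $z^{\alpha k+\beta-1-m}/\Gamma(\alpha k+\beta-m)$. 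Summing over $k$ gives $z^{\beta-m-1}E_{\alpha,\beta-m}(z^\alpha)$. The one point to check is terms where $s$ is a nonnegative integer smaller than $m$. For these the $m$-th derivative of $z^s$ vanishes identically, and correspondingly $1/\Gamma(\alpha k+\beta-m)=0$ because $\alpha k+\beta-m$ is then a nonpositive integer. So the identity holds with the convention $1/\Gamma(-j)=0$.

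\emph{Asymptotic expansion.} First I would derive the Hankel-type representation
$$E_{\alpha,\beta}(z)=\frac{1}{2\pi i\alpha}\int_{\gamma(\epsilon,\mu)}\frac{e^{\zeta^{1/\alpha}}\zeta^{(1-\beta)/\alpha}}{\zeta-z}\,d\zeta .$$
This comes from Hankel's formula $\frac{1}{\Gamma(s)}=\frac{1}{2\pi i}\int_{Ha}e^{u}u^{-s}\,du$, the substitution $u=\zeta^{1/\alpha}$, and summation of the geometric series in $z/\zeta$ for small $|z|$. Here $\gamma(\epsilon,\mu)$ consists of the two rays $\arg\zeta=\pm\mu$, $|\zeta|\ge\epsilon$, together with the arc $|\zeta|=\epsilon$, $|\arg\zeta|\le\mu$. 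The representation then extends by analytic continuation to all $z$ lying to the left of the contour, i.e. with $\mu<|\arg z|\le\pi$. The constraint \eqref{mu} is exactly what makes this work: $\mu>\pi\alpha/2$ gives decay of $e^{\zeta^{1/\alpha}}$ on the rays, and $\mu<\pi\alpha$ keeps the contour inside the admissible region.

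Next I would insert the finite expansion
$$\frac{1}{\zeta-z}=-\sum_{r=1}^{n}\frac{\zeta^{r-1}}{z^r}+\frac{\zeta^{n}}{z^{n}(\zeta-z)}.$$
By Hankel's formula again, each term of the finite sum integrates to $-\frac{1}{z^r\Gamma(\beta-\alpha r)}$, which produces the main part of the expansion.

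\emph{Main obstacle.} The hard step is the remainder
$$\frac{1}{2\pi i\alpha\, z^{n}}\int_{\gamma}\frac{e^{\zeta^{1/\alpha}}\zeta^{(1-\beta)/\alpha+n}}{\zeta-z}\,d\zeta ,$$
which must be shown to be $O(|z|^{-n-1})$ uniformly for $\mu\le|\arg z|\le\pi$. The difficulty is that $z$ may approach the rays of the contour. I would handle it as follows:
\begin{itemize}
\item Choose an auxiliary angle $\mu'$ with $\pi\alpha/2<\mu'<\mu$ and run the contour at angle $\mu'$ instead of $\mu$.
\item For large $|z|$ this gives $|\zeta-z|\ge c|z|\sin(\mu-\mu')$ on the whole contour.
\item The integral $\int_\gamma|e^{\zeta^{1/\alpha}}||\zeta|^{\Re(1-\beta)/\alpha+n}|d\zeta|$ converges, again because $\mu'>\pi\alpha/2$.
\end{itemize}
Together these bound the remainder by $C|z|^{-n-1}$, which completes the expansion.
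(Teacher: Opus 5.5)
Your proof is correct and essentially coincides with what the paper relies on: the paper gives no argument of its own, only remarking that \eqref{hosila} follows from the series definition (your termwise differentiation) and quoting the expansion from the Mittag-Leffler literature, where it is proved exactly as you do, via the Hankel-type contour representation, the finite geometric expansion of $(\zeta-z)^{-1}$, and an auxiliary angle $\pi\alpha/2<\mu'<\mu$ giving $|\zeta-z|\ge c|z|$ uniformly up to $|\arg z|=\mu$. One cosmetic point: your ratio $\Gamma(s+1)/\Gamma(s+1-m)$ is formally undefined when $\alpha k+\beta\in\{0,-1,-2,\dots\}$, but there both sides' coefficients vanish, since $s(s-1)\cdots(s-m+1)/\Gamma(s+1)=1/\Gamma(s+1-m)$ is an identity of entire functions of $s$.
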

\begin{lem} \label{Lemm}(\cite{Dzherbashian1966}, p.136)
Let $0<\alpha <2$ and  $\mu$ is defined by \eqref{mu}. Then for any  $\beta , z\in\mathbb C$ with $\ \mu\leq|\arg z|\leq\pi$ one has \begin{equation}\left| {{E}_{\alpha ,\beta }}(z) \right|\le \frac{C}{1+\left| z \right|},
\end{equation} where $C$ is constant.\end{lem}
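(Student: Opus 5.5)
The bound will come from two different arguments glued together: the asymptotic expansion of Theorem \ref{thmrudolf} for large $|z|$, and continuity of the entire function $E_{\alpha,\beta}$ on a compact set for small $|z|$. Fix $\beta\in\mathbb C$ and $\alpha$, $\mu$ as in \eqref{mu}, and split the sector $\{z:\ \mu\le|\arg z|\le\pi\}$ into the region $|z|\le R$ and the region $|z|>R$, where $R$ is a large radius chosen below.

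\emph{Large $|z|$.} Apply Theorem \ref{thmrudolf} with $n=1$:
$$E_{\alpha,\beta}(z)=-\frac{1}{\Gamma(\beta-\alpha)}\frac1z+O\!\left(\frac{1}{z^{2}}\right)$$
as $|z|\to\infty$ with $\mu\le|\arg z|\le\pi$. Here $1/\Gamma(\beta-\alpha)$ is a finite number, possibly zero, because $1/\Gamma$ is entire. So there are $R>0$ and $C_1$ with $|E_{\alpha,\beta}(z)|\le C_1/|z|$ for $|z|\ge R$ in the sector. Since $1+|z|\le 2|z|$ when $|z|\ge R\ge1$, this gives $|E_{\alpha,\beta}(z)|\le 2C_1/(1+|z|)$ there.

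\emph{Small $|z|$.} The series defining $E_{\alpha,\beta}$ converges for all $z$: the terms behave like $|z|^k/\Gamma(\alpha k+\beta)$ and $\Gamma(\alpha k+\beta)$ grows factorially in $k$ because $\alpha>0$. Hence $E_{\alpha,\beta}$ is entire and bounded on the compact disc $|z|\le R$, say by $M$. There $|E_{\alpha,\beta}(z)|\le M(1+R)/(1+|z|)$. Taking $C=\max\{2C_1,M(1+R)\}$ completes the estimate.

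The main obstacle is the range of $\alpha$. Theorem \ref{thmrudolf} is stated only for $0<\alpha<1$, while the Lemma allows $0<\alpha<2$. For $1\le\alpha<2$ I would invoke Dzhrbashyan's version of the same asymptotic expansion, which holds in the sector $\mu\le|\arg z|\le\pi$ with $\pi\alpha/2<\mu<\min\{\pi,\pi\alpha\}$ for all $0<\alpha<2$. It comes from the integral representation of $E_{\alpha,\beta}$ over a Hankel-type contour, the exponential contribution being absent exactly in that sector. With that expansion in hand, the two-region argument above is unchanged. The constant $C$ depends on $\alpha,\beta,\mu$ but not on $z$.
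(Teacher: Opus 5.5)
Your proof is correct, and it is the standard argument behind the cited result: the $n=1$ asymptotic expansion for large $|z|$, combined with boundedness of the entire function $E_{\alpha,\beta}$ on a disc for small $|z|$. The paper itself gives no proof beyond the citation to Dzhrbashyan, so this is the approach the reference takes. You are also right that Theorem \ref{thmrudolf} as stated only covers $0<\alpha<1$, so for $1\le\alpha<2$ the expansion must be taken from Dzhrbashyan. And $C$ must depend on $\beta$, since already $E_{\alpha,\beta}(0)=1/\Gamma(\beta)$ is unbounded as $\beta$ varies.
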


Now we present several  fundamental inequalities that are frequently employed in our analysis (\cite{Evans}, p.706).

a) Young's inequality. Let $1<p, q<\infty$, $\frac{1}{p}+\frac{1}{q}=1.$ Then
$$ab\leq\frac{a^p}{p}+\frac{b^q}{q},\quad(a,b>0).$$

b) Young's inequality with $\varepsilon.$
$$ab\leq \varepsilon a^p+C(\varepsilon)b^q,\quad(a,b>0,\varepsilon>0)$$
for $C(\varepsilon)=(\varepsilon p)^\frac{-q}{p}  q^{-1}.$

c) H\"older's inequality. Assume $U\subset \mathbb R^n$, $1\leq p,q\leq \infty,\frac{1}{p}+\frac{1}{q}=1.$ Then if $u\in L^p (U), v\in L^q (U),$ we have
$$\int\limits_{U} |uv|dx\leq \|u\|_{L^p (U)}\|v\|_{L^q (U)}.$$

\begin{lem} \label{Lemm_yakupov}\cite{Yakupov1975}
If $u \in S(\mathbb R)$ and  $\int\limits_{-\infty}^{+\infty}\bigg(\frac{{\partial}^N u}{\partial x^N}\bigg)^2 dx\leq M<+\infty$, then the inequality is true.
\begin{equation}\label{eqlem7}
    \int\limits_{-\infty}^{+\infty}x^{2m}\bigg(\frac{\partial^k u}{\partial x^k}\bigg)^2 dx\leq c_1(k,m,M)\bigg(\int\limits_{-\infty}^{+\infty}x^{2m+2}u^2 dx\bigg)^{\frac{m}{m+1}}+c_2(k,m,M)\bigg(\int\limits_{-\infty}^{+\infty}x^{2m+2}u^2dx\bigg)^\frac{1}{2^k +1},\end{equation}
for $2mk\leq N$, where $c_j (k,m,M),  j=1,2$  are some positive numbers, while $m$, $k$ and $N$ are natural numbers.
\end{lem}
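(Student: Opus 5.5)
We prove the inequality \eqref{eqlem7} by repeatedly integrating by parts, then applying the Cauchy--Schwarz (Hölder) inequality and Young's inequality with $\varepsilon$. Write
$$I_{m,k}=\int_{\mathbb R} x^{2m}(\partial_x^k u)^2\,dx,\qquad A=\int_{\mathbb R} x^{2m+2}u^2\,dx .$$
Since $u\in S(\mathbb R)$, all boundary terms in the integrations by parts vanish. The argument has two reductions.

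\textbf{First reduction: lowering the number of derivatives.} Integrating by parts once gives
$$I_{m,k}=-\int x^{2m}\partial_x^{k-1}u\,\partial_x^{k+1}u\,dx-2m\int x^{2m-1}\partial_x^{k-1}u\,\partial_x^{k}u\,dx.$$
The second term can be rewritten as $m(2m-1)\int x^{2m-2}(\partial_x^{k-1}u)^2dx$, so it carries a lower weight and lower order. The first term we bound by Cauchy--Schwarz:
$$\Big(\int x^{2m}(\partial_x^{k-1}u)^2\Big)^{1/2}\Big(\int x^{2m}(\partial_x^{k+1}u)^2\Big)^{1/2}.$$
This is the usual interpolation step. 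Iterating it, a weighted derivative of order $k$ is controlled by geometric means of weighted lower-order quantities and of higher derivatives.

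The exponents $2^{-j}$ come from this repeated square-rooting. After $k$ steps the factor involving $u$ itself appears with power $2^{-k}$, and after normalisation this yields the exponent $1/(2^k+1)$ in the second term.

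\textbf{Second reduction: removing the weight from the highest derivative.} The higher derivatives with weight must be brought back to the unweighted quantity $\int(\partial_x^N u)^2\le M$. For this we split the weight, using $x^{2m}=x^{2m}\cdot 1$, and integrate by parts so that each derivative moved onto $x^{2m}$ lowers the power of $x$ by one. The hypothesis $2mk\le N$ is what guarantees enough derivatives to absorb all $2m$ powers of the weight through the chain of Cauchy--Schwarz steps before reaching order $N$. The unweighted intermediate derivatives $\int(\partial_x^j u)^2$, with $j\le N$, are handled by Fourier interpolation. By Theorem \ref{thm1} and Plancherel,
$$\int|\lambda|^{2j}|\hat u|^2\le \Big(\int|\hat u|^2\Big)^{1-j/N}M^{j/N},$$
and $\int u^2$ is in turn bounded by $A$ plus a constant via $\int_{|x|\le1}u^2\le 2\sup|u|^2$. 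This step is where the constants $c_1,c_2$ acquire their dependence on $M$.

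\textbf{Main obstacle.} The hard part is the bookkeeping. We must close the resulting family of inequalities among the quantities $I_{j,i}$, with $j\le m$ and $i\le k$, and obtain exactly the two exponents $m/(m+1)$ and $1/(2^k+1)$. For the weight exponent, the term $\int x^{2m}u^2$ is interpolated by Hölder with exponents $\frac{m+1}{m}$ and $m+1$ between $\int x^{2m+2}u^2$ and $\int u^2$. This gives
$$\int x^{2m}u^2\le A^{m/(m+1)}\Big(\int u^2\Big)^{1/(m+1)},$$
and explains the first term. The terms that reappear on the right-hand side are absorbed using Young's inequality with $\varepsilon$.

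We would proceed by induction on $k$, with $m$ fixed, carrying the inequality in the form
$$I_{m,k}\le c_1A^{m/(m+1)}+c_2A^{1/(2^k+1)}.$$
At each step the new Cauchy--Schwarz factor is absorbed by Young's inequality, with the term $\varepsilon I_{m,k}$ moved to the left-hand side.
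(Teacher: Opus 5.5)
The paper gives no proof of this lemma; it only cites \cite{Yakupov1975}. Your proposal cannot be completed, and the reason is more than bookkeeping: as transcribed, \eqref{eqlem7} is false. Take $m=k=1$ and $N=3$, so that $2mk\le N$. Let $\phi(y)=e^{-y^2/2}$ and $u_L(x)=L^{5/2}\phi(x/L)$. Then $\int(u_L''')^2dx=\int(\phi''')^2dy=:M$ for every $L>0$, while
\[
\int x^4u_L^2\,dx=\tfrac{3\sqrt{\pi}}{4}L^{10},\qquad \int x^2(u_L')^2\,dx=\tfrac{3\sqrt{\pi}}{4}L^{6}.
\]
So the right-hand side of \eqref{eqlem7} is $O(L^5)$, while the left-hand side is of exact order $L^6$. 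The same scaling with $u_L=L^{N-1/2}\phi(x/L)$ violates \eqref{eqlem7} whenever $N>k(m+1)$. Since $2mk>k(m+1)$ for $m\ge 2$, every admissible $N$ gives a counterexample once $m\ge2$. An extra hypothesis bounding a low-order quantity, such as $\int u^2\,dx$, has to be added. Where the paper uses the lemma, to justify the semi-norms \eqref{yarim norma}, such a bound is in fact available.

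Your argument breaks exactly at this point. You claim $\int u^2\le A+\mathrm{const}$ via $\int_{|x|\le1}u^2\le 2\sup|u|^2$, but nothing in the hypotheses bounds $\sup|u|$: the functions $u_L$ above have $\sup|u_L|=L^{5/2}\to\infty$ with $M$ fixed. Even granting $\int u^2\le A+C$, your H\"older step only gives $\int x^{2m}u^2\le A^{m/(m+1)}(A+C)^{1/(m+1)}$, which grows like $A$ rather than $A^{m/(m+1)}$. That exponent requires $\int u^2$ to be bounded independently of $A$.

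Separately, the induction on $k$ does not close as described. Your integration by parts gives
\[
I_{m,k}\le I_{m,k-1}^{1/2}I_{m,k+1}^{1/2}+m(2m-1)I_{m-1,k-1}.
\]
Young's inequality therefore produces $\varepsilon I_{m,k+1}$, not $\varepsilon I_{m,k}$, and there is nothing on the left to absorb it into. Controlling these weighted higher derivatives, by trading powers of $x$ for derivatives up to order $N$, is the real content of such a lemma. Your ``second reduction'' only asserts it, and the role of $2mk\le N$ and the exponent $1/(2^k+1)$ are likewise stated rather than derived.

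If a bound $\int u^2\le M$ is added, your H\"older step does give $\int x^{2m}u^2\le M^{1/(m+1)}A^{m/(m+1)}$. The case $k\ge1$ would still need a genuine weighted interpolation inequality linking $\int x^{2m+2}u^2$, $\int u^2$ and $\int(\partial_x^N u)^2$.
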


It is easy to see that from \eqref{eqlem7} it follows that
\begin{equation}\label{Asosiy lemma}   \int\limits_{-\infty}^{+\infty}x^{2m}\bigg(\frac{\partial^k u }{\partial x^k}\bigg)^2 dx\leq M_\varepsilon+\varepsilon \int\limits_{-\infty}^{+\infty}x^{2m+2}u^2 dx, \end{equation}
where $\varepsilon>0$ the positive constant $M_\varepsilon$ depends on $m,k,M$ and $N$.

Now we want to introduce semi-norms which are important in defining topological convergence in $S(\mathbb R)$.
According to  \cite{Yakupov1975} we introduce these semi-norms as follows
\begin{equation}\label{yarim norma}
|||u|||_{m,s}^2=\int\limits_{-\infty}^{+\infty}\bigg|\frac{\partial^m}{\partial x^m}u(x)\bigg|^2 dx+\int\limits_{-\infty}^{+\infty}\left(1+x^2\right)^s| u(x)|^2dx,
\end{equation}
where $m$ and $s$ are nonnegative integers.

Now we want to justify the above semi-norms. It is clear that if $v(x)\in S(\mathbb R)$ then the semi-norm $|||u|||_{m,s}$ is bounded for any $s,m\in \mathbb N\cup\{0\}.$ 

On the other hand, if $|||u|||_{m,s}$ is bounded for any $s,m\in \mathbb N\cup\{0\}$, then using the Cauchy–Bunyakovsky–Schwarz inequality and inequality \eqref{Asosiy lemma}, we have
$$ \sup_{x\in \mathbb R} \left|x^s\frac{d^m v(x)}{d x^m}\right|^2= 
2\sup_{x\in \mathbb R} \int\limits_{-\infty}^x\left( s\xi^{2s-1}\left(\frac{d^m v(\xi)}{d \xi^m}\right)^2
+\xi^{2s}\frac{d^{m} v(\xi)}{d \xi^{m}}\frac{d^{m+1} v(\xi)}{d \xi^{m+1}}\right)d\xi$$

$$\leq 2s\int\limits_{-\infty}^{+\infty} (1+x^2)^{s}\left(\frac{d^m v(x)}{d x^m}\right)^2dx$$

$$+ \left(\int\limits_{-\infty}^{+\infty} (1+x^2)^{s}\left(\frac{d^m v(x)}{d x^m}\right)^2dx\int\limits_{-\infty}^{+\infty} (1+x^2)^{s}\left(\frac{d^{m+1} v(x)}{d x^{m+1}}\right)^2dx\right)\leq const.$$

So, we can conclude that the condition \eqref{S(R)} in the definition of the Schwartz space $S(\mathbb R)$ is equivalent to the boundedness of the semi-norms $|||u|||_{m,s}$ for all $m, s\in \mathbb  N\cup{0}.$ 

We introduce the space of continuous functions $\mathbf u(t)=u(\cdot,t)$ as $SC(T)=C([0,T], S(\mathbb R))$. Following to \cite{Yakupov1975} we introduce the topological space $SC_\beta(T)=\{ u\in SC(T): \partial_{0t}^\beta u\in SC(T)\}$. Convergence in this space is defined by countably many semi-norms $\max_{0\leq t\leq T}|||u (\cdot,t)|||_{m,s}$, $\max_{0\leq t\leq T}|||(\partial_{0t}^\beta u) (\cdot,t)|||_{m,s},$ where $m, s =0,1,2,...\ .$

The following two statements, initially established by Alikhanov, are next presented in a form suitable for our framework.
\begin{lem} \label{LemmAli}\cite{Alikhanov}
For any function $v(t)$ absolutely continuous on $[0,T]$, one has the inequality
\begin{equation}\label{Alikhanov}v(t)\partial_{0t}^\beta v(t)\geq\frac{1}{2}\partial_{0t}^\beta v^2(t),\quad0< \beta<1,\end{equation}
\end{lem}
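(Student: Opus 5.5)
We prove Alikhanov's inequality \eqref{Alikhanov} by showing that the difference $v(t)\partial_{0t}^\beta v(t)-\frac12\partial_{0t}^\beta v^2(t)$ is a nonnegative integral. Since $v$ is absolutely continuous, so is $v^2$, and $(v^2)'=2vv'$ almost everywhere. Hence both Caputo derivatives can be written through the representation \eqref{Caputo2}. Subtracting gives
$$
v(t)\partial_{0t}^\beta v(t)-\frac12\partial_{0t}^\beta v^2(t)=\frac{1}{\Gamma(1-\beta)}\int\limits_0^t \frac{\bigl(v(t)-v(\xi)\bigr)v'(\xi)}{(t-\xi)^{\beta}}\,d\xi .
$$
It therefore suffices to show that this integral is nonnegative for every fixed $t\in(0,T]$.

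Fix $t$ and set $w(\xi)=v(t)-v(\xi)$. Then $w'(\xi)=-v'(\xi)$ and $w(t)=0$, so
$$
\bigl(v(t)-v(\xi)\bigr)v'(\xi)=-w(\xi)w'(\xi)=-\frac12\frac{d}{d\xi}w^2(\xi),
$$
and the integral becomes
$$
-\frac{1}{2}\int_0^t (t-\xi)^{-\beta}\frac{d}{d\xi}w^2(\xi)\,d\xi .
$$
We integrate by parts on $[0,t-\varepsilon]$ and let $\varepsilon\to0$. The boundary terms are
$$
-\frac12\Bigl[(t-\xi)^{-\beta}w^2(\xi)\Bigr]_0^{t-\varepsilon}=-\frac12\varepsilon^{-\beta}w^2(t-\varepsilon)+\frac12 t^{-\beta}w^2(0).
$$
The remaining integral term is
$$
\frac{\beta}{2}\int_0^{t-\varepsilon}(t-\xi)^{-\beta-1}w^2(\xi)\,d\xi .
$$
In the limit $\varepsilon\to0$ we obtain
$$
\frac{1}{2}t^{-\beta}\bigl(v(t)-v(0)\bigr)^2+\frac{\beta}{2}\int_0^t\frac{\bigl(v(t)-v(\xi)\bigr)^2}{(t-\xi)^{1+\beta}}\,d\xi\;\ge\;0,
$$
provided the first boundary term $\varepsilon^{-\beta}w^2(t-\varepsilon)$ tends to $0$. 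Once that is established, the inequality follows after multiplying by $1/\Gamma(1-\beta)>0$.

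\textbf{Main obstacle.} The delicate point is the boundary term at $\xi=t$ and the convergence of the last integral, since only absolute continuity of $v$ is assumed.
\begin{itemize}
\item If $v$ is Lipschitz, or more generally H\"older of order greater than $\beta/2$, near $t$, then $w^2(t-\varepsilon)=O(\varepsilon^{2\gamma})$ with $2\gamma>\beta$, so $\varepsilon^{-\beta}w^2(t-\varepsilon)\to0$.
\item For general absolutely continuous $v$ we do not control this term directly. Instead we keep it: before passing to the limit, the identity on $[0,t-\varepsilon]$ expresses the original integral as
$$
-\tfrac12\varepsilon^{-\beta}w^2(t-\varepsilon)+\tfrac12t^{-\beta}w^2(0)+\tfrac{\beta}{2}\int_0^{t-\varepsilon}(t-\xi)^{-\beta-1}w^2\,d\xi .
$$
The left-hand side is an absolutely convergent integral for almost every $t$, since $(t-\xi)^{-\beta}\ast|v'|$ is integrable.
\item To handle the boundary term, we first prove the inequality for $v\in C^1[0,T]$, where every term behaves well. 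We then approximate a general absolutely continuous $v$ in $W^{1,1}(0,T)$ by $C^1$ functions $v_n$. Then $v_n\to v$ uniformly, and $\partial^\beta_{0t}v_n\to\partial^\beta_{0t}v$ in $L^1(0,T)$ by Young's convolution inequality; the same holds for $v_n^2$. Passing to a subsequence converging almost everywhere gives the inequality for almost every $t$, which is the sense in which the lemma is used.
\end{itemize}
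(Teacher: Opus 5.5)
Your argument is correct and is the standard proof from Alikhanov's paper, which the paper only cites without proving: write the difference as $\frac{1}{\Gamma(1-\beta)}\int_0^t (v(t)-v(\xi))v'(\xi)(t-\xi)^{-\beta}\,d\xi$ and integrate by parts against $-\frac12\,d\bigl(v(t)-v(\xi)\bigr)^2$; your $C^1$-density step is also valid and gives the inequality for a.e.\ $t$. That step can be skipped, because at every $t$ with $\int_0^t (t-\xi)^{-\beta}|v'(\xi)|\,d\xi<\infty$ (hence for a.e.\ $t$) you have $|w(t-\varepsilon)|\le\int_{t-\varepsilon}^{t}|v'(\xi)|\,d\xi\le\varepsilon^{\beta}\int_{t-\varepsilon}^{t}(t-\xi)^{-\beta}|v'(\xi)|\,d\xi$, so $\varepsilon^{-\beta}w^2(t-\varepsilon)\to0$ directly and the explicit nonnegative identity holds at all such $t$.
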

\begin{lem} \cite{Alikhanov}\label{GRONWALL}
Let a nonnegative absolutely continuous function $y(t)$ satisfy the inequality \begin{equation}\partial_{0t}^\beta y(t)\leq c_1y(t)+c_2(t),\quad0< \beta \leq 1,\end{equation}for almost all $t$ in $[0,T]$, where $c_1>0$ and $c_2(t)$ is an integrable nonnegative
function on $[0,T]$. Then
\begin{equation}y(t)\leq y(0)E_\beta(c_1 {t}^\beta)+\Gamma(\beta)E_{\beta,\beta}(c_1{t}^\beta)I_{0t}^{\beta} c_2(t), \end{equation}
\end{lem}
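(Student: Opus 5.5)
The plan is to turn the differential inequality into an integral inequality by applying $I_{0t}^\beta$, and then to iterate it, Picard style, to produce the Mittag-Leffler series.

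\textbf{Step 1: integral form.} Since $y$ is absolutely continuous, $y'\in L^1(0,T)$, and by the representation following \eqref{Caputo} we have $\partial_{0t}^\beta y=I_{0t}^{1-\beta}y'\in L^1(0,T)$. The semigroup property of Riemann--Liouville integrals then gives
$$I_{0t}^\beta\partial_{0t}^\beta y=I_{0t}^{1}y'=y(t)-y(0).$$
For $\beta=1$ this is immediate. The operator $I_{0t}^\beta$ has a nonnegative kernel, so it preserves a.e. inequalities between integrable functions. Applying it to the hypothesis therefore yields
$$y(t)\le y(0)+g(t)+c_1 I_{0t}^\beta y(t),\qquad g:=I_{0t}^\beta c_2\ge 0 .$$

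\textbf{Step 2: iteration.} Substitute this inequality into its own last term $n$ times. Positivity of $I_{0t}^\beta$ is used at each step, together with the identities $I_{0t}^{k\beta}1=t^{k\beta}/\Gamma(k\beta+1)$ and $I^{k\beta}_{0t}I^{\beta}_{0t}=I^{(k+1)\beta}_{0t}$. The result is
$$y(t)\le y(0)\sum_{k=0}^{n-1}\frac{(c_1t^\beta)^k}{\Gamma(k\beta+1)}+\sum_{k=0}^{n-1}c_1^k I_{0t}^{(k+1)\beta}c_2(t)+c_1^n I_{0t}^{n\beta}y(t).$$
The function $y$ is continuous on $[0,T]$, hence bounded by some $\|y\|_\infty$. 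The remainder is then at most
$$\|y\|_\infty\,\frac{(c_1T^\beta)^n}{\Gamma(n\beta+1)},$$
which tends to $0$ because it is the general term of the convergent series $E_\beta(c_1T^\beta)$. Letting $n\to\infty$, the first sum becomes $y(0)E_\beta(c_1t^\beta)$.

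\textbf{Step 3: summing the $c_2$ terms.} All terms are nonnegative, so Tonelli's theorem allows the sum and integral to be exchanged:
$$\sum_{k\ge0}c_1^kI_{0t}^{(k+1)\beta}c_2=\int_0^t (t-s)^{\beta-1}\sum_{k\ge0}\frac{c_1^k(t-s)^{k\beta}}{\Gamma(k\beta+\beta)}c_2(s)\,ds=\int_0^t (t-s)^{\beta-1}E_{\beta,\beta}\big(c_1(t-s)^\beta\big)c_2(s)\,ds .$$
The series defining $E_{\beta,\beta}(z)$ has positive coefficients, so it is increasing for $z\ge0$. Hence $E_{\beta,\beta}(c_1(t-s)^\beta)\le E_{\beta,\beta}(c_1t^\beta)$, and the integral is bounded by
$$E_{\beta,\beta}(c_1t^\beta)\int_0^t(t-s)^{\beta-1}c_2(s)\,ds=\Gamma(\beta)E_{\beta,\beta}(c_1t^\beta)I_{0t}^\beta c_2(t),$$
which is the claimed estimate.

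\textbf{Main obstacle.} The delicate point is Step 1: checking that $\partial_{0t}^\beta y$ is integrable so that $I_{0t}^\beta$ may be applied, and that the identity $I^\beta_{0t}I^{1-\beta}_{0t}=I^1_{0t}$ holds in $L^1$. Both follow from the semigroup lemma for $L^p$ functions with $p=1$. The other steps use only positivity and Tonelli's theorem.
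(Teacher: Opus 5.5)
Your proof is correct. The paper gives no proof of this lemma; it is simply quoted from Alikhanov. So the comparison is with the usual short argument behind the citation rather than with anything in the text.

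That argument sets $\psi:=c_1y+c_2-\partial_{0t}^\beta y\ge 0$ a.e. It views $y$ as the solution of the linear problem $\partial_{0t}^\beta y=c_1y+(c_2-\psi)$ with initial value $y(0)$, and writes it by the Mittag-Leffler representation of type \eqref{yechim}:
\[
y(t)=y(0)E_\beta(c_1t^\beta)+\int_0^t(t-s)^{\beta-1}E_{\beta,\beta}\bigl(c_1(t-s)^\beta\bigr)\bigl(c_2(s)-\psi(s)\bigr)\,ds .
\]
The $\psi$-term is dropped because $E_{\beta,\beta}>0$ on $[0,\infty)$, and the proof ends with the same monotonicity step as your Step 3.

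That route is shorter, but it takes for granted that the Volterra equation $y=y(0)+c_1I_{0t}^\beta y+I_{0t}^\beta h$ with $h\in L^1(0,T)$ has a unique continuous solution given by that resolvent formula. Your Picard iteration establishes exactly this, in inequality form. It is the same mechanism as Lemma \ref{yangi lemma1}, with the remainder killed by the growth of $\Gamma(n\beta+1)$, and it uses only positivity of $I_{0t}^\beta$, the semigroup law and Tonelli's theorem. So your argument is self-contained. Both routes pass through the same sharper intermediate bound with kernel $E_{\beta,\beta}(c_1(t-s)^\beta)$ before the final monotonicity step.

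One small point in Step 1. In Kilbas' formulation, the semigroup law for $L^p$ functions is only an almost-everywhere statement unless the orders add up to more than $1$, and here they add up to exactly $1$. You cannot simply pass from a.e.\ $t$ to every $t$ by continuity, since $I_{0t}^\beta c_2$ need not be continuous, or even finite everywhere.

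The identity $I_{0t}^\beta I_{0t}^{1-\beta}y'=y(t)-y(0)$ does nevertheless hold at every $t$. Apply Tonelli to $|y'|$: the composite kernel $\int_r^t(t-s)^{\beta-1}(s-r)^{-\beta}\,ds=\Gamma(\beta)\Gamma(1-\beta)$ is bounded, so Fubini applies at each fixed $t$. Integrating the a.e.\ hypothesis against the nonnegative kernel $(t-s)^{\beta-1}$ is also legitimate at each fixed $t$, with values in $[0,\infty]$ for the $c_2$-term. Hence your conclusion holds for every $t\in[0,T]$.
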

\begin{lem}\label{yangi lemma1}
Let $g_i(t),i=0,1,2,...,n$, $t\geq 0$ be a sequence of continuous  functions satisfying the inequality
$$g_i(t)\leq a+bI_{0t}^\beta g_{i-1}({t}), \quad i=1,2,...,n, $$
where $a\geq 0,b\geq 0, 0<\beta\leq 1,$ and $I_{0t}^\beta$ denotes the Riemann-Liouville fractional integral of order $\beta$. Then for all $n\in \mathbb{N}$, the following estimate holds
\begin{equation}\label{yangi lemma}
g_n (t)\leq a\sum_{i=0}^{n-1} \frac{b^i t^{i\beta}}{\Gamma(i\beta +1)}+b^n I_{0t}^{n\beta} g_0 (t).
\end{equation}
\end{lem}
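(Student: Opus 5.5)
Since the hypothesis relates each $g_i$ to its predecessor $g_{i-1}$, I will argue by induction on $n$. Two standard facts about the Riemann--Liouville integral carry the argument. First, $I_{0t}^\beta$ is monotone: if $h_1\le h_2$ are continuous on $[0,t]$, then $I_{0t}^\beta h_1\le I_{0t}^\beta h_2$, because the kernel $(t-\xi)^{\beta-1}/\Gamma(\beta)$ is nonnegative. Second, we have the semigroup law $I_{0t}^{\alpha}I_{0t}^{\beta}=I_{0t}^{\alpha+\beta}$ quoted above from Kilbas; continuous functions on $[0,T]$ lie in $L^\infty$, so it applies. The only other ingredient is the power formula
$$I_{0t}^{\beta}\frac{t^{i\beta}}{\Gamma(i\beta+1)}=\frac{t^{(i+1)\beta}}{\Gamma((i+1)\beta+1)},$$
which follows from the Beta integral $\int_0^t (t-\xi)^{\beta-1}\xi^{i\beta}\,d\xi=t^{(i+1)\beta}B(\beta,i\beta+1)$.

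\textbf{Base case.} For $n=1$ the claimed estimate \eqref{yangi lemma} reads $g_1(t)\le a+bI_{0t}^\beta g_0(t)$, which is exactly the hypothesis with $i=1$, since the sum reduces to its $i=0$ term $a$.

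\textbf{Inductive step.} Assume \eqref{yangi lemma} holds for $n-1$, with $g_{n-1}$ on the left. Start from $g_n\le a+bI_{0t}^\beta g_{n-1}$. Because $b\ge0$ and $I_{0t}^\beta$ is monotone, we may substitute the inductive bound for $g_{n-1}$ under the integral. By linearity of $I_{0t}^\beta$ this gives
$$g_n(t)\le a+ab\sum_{i=0}^{n-2}b^{i}\,I_{0t}^\beta\frac{t^{i\beta}}{\Gamma(i\beta+1)}+b^{n}I_{0t}^{\beta}I_{0t}^{(n-1)\beta}g_0(t).$$
Apply the power formula to each term of the sum, and the semigroup law to the last term. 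After shifting the summation index by one, the sum becomes $a\sum_{i=1}^{n-1}b^it^{i\beta}/\Gamma(i\beta+1)$. Together with the leading $a$, which is the $i=0$ term, this is exactly the right-hand side of \eqref{yangi lemma}.

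\textbf{Main difficulty.} Nothing here is a real obstacle. The points that need a moment's care are the following.
\begin{itemize}
\item Monotonicity is used in the direction that requires $b\ge0$, which is assumed.
\item The composition $I^{\beta}I^{(n-1)\beta}$ must be legitimate. It is, because $g_0$ is continuous and hence bounded on $[0,T]$.
\item The Beta-function identity must be computed correctly: $B(\beta,i\beta+1)=\Gamma(\beta)\Gamma(i\beta+1)/\Gamma((i+1)\beta+1)$. The factor $\Gamma(\beta)$ cancels the $1/\Gamma(\beta)$ in the kernel, and $\Gamma(i\beta+1)$ cancels the denominator of the power term, leaving $1/\Gamma((i+1)\beta+1)$ as required.
\end{itemize}
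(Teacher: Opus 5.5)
Your proof is correct and follows the same route as the paper. The paper writes out $g_1$, $g_2$, $g_3$ by substituting the hypothesis into itself and then appeals to induction, implicitly using exactly your three ingredients (monotonicity of $I_{0t}^\beta$ for $b\ge 0$, the power rule $I_{0t}^{\beta}\,t^{i\beta}/\Gamma(i\beta+1)=t^{(i+1)\beta}/\Gamma((i+1)\beta+1)$, and the semigroup law), whereas you carry out the inductive step explicitly and justify each ingredient.
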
 
\begin{proof} Using the given inequality recursively, we obtain
  $$g_1 (t) \leq a+b I_{0t}^\beta g_0 (t),$$
  for the next term,
$$g_2 (t) \leq a+bI_{0t}^\beta g_1 (t) \leq a+bI_{0t}^\beta \left(a+b I_{0t}^\beta g_0 (t)\right)$$  
$$=a+ab\frac{t^\beta}{\Gamma(\beta+1)}+b^2 I_{0t}^{2\beta}g_0 (t),$$ similarly,
$$g_3 (t)\leq a+bI_{0t}^\beta g_2 (t)\leq a+bI_{0t}^\beta\left(a+ab\frac{t^\beta}{\Gamma(\beta+1)}+b^2 I_{0t}^{2\beta}g_0 (t)\right)$$
$$=a+ab\frac{t^\beta}{\Gamma(\beta+1)}+ab^2\frac{t^{2\beta}}{\Gamma(2\beta+1)}+b^3 I_{0t}^{3\beta}g_0 (t),$$
proceeding by mathematical induction, we obtain in general, 
$$g_n (t) \leq a+ab\frac{t^\beta}{\Gamma(\beta+1)}+ab^2\frac{t^{2\beta}}{\Gamma(2\beta+1)}+ab^3\frac{t^{3\beta}}{\Gamma(3\beta+1)}+....$$
$$+ab^{n-1}\frac{t^{(n-1)\beta}}{\Gamma((n-1)\beta+1)}+b^n I_{0t}^ {n\beta} g_0 (t).$$
This can written in compact form as
\begin{equation}
g_n (t) \leq a\sum_{i=0}^{n-1}b^i\frac{t^{i\beta}}{\Gamma(i\beta+1)}+b^n
I_{0t}^{n\beta}g_0(t).\end{equation}
The lemma is proved.
\end{proof}

\section{CAUCHY PROBLEM FOR THE LINEAR EQUATION} 
We consider the initial value (Cauchy) problem for the linear part of the Kuramoto-Sivashinsky equation 
\begin{equation}\label{chiziqli tenglama}\partial _{0t}^{\beta }u+a^2u_{xxxx}+bu_{xxx}+cu_{xx}+du_x+ku=g(x,t), \quad x\in {\mathbb R},\quad 0<   t\le T,\end{equation}		
with initial condition	
\begin{equation}\label{3-tenglama}u(x,0)=\varphi(x), \quad x\in {\mathbb R},\quad 0<   t\le T, \end{equation} 
where  $0<\beta <1$, $a>0$, $b, c, d,$ and $k$ are real numbers.

\begin{thm} \label{linear}
Let $\varphi (x)\in S(\mathbb R)$ and ${g(t,x)}\in SC(T)$, 
there is a unique solution to the Cauchy \eqref{chiziqli tenglama}-\eqref{3-tenglama} problem in the  class of functions $SC_\beta(T)\cap AC([0,T];S(\mathbb R))$.
\end{thm}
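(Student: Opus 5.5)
We will apply the Fourier transform in $x$ to \eqref{chiziqli tenglama}--\eqref{3-tenglama}. Writing $\hat u(\lambda,t)=\mathbf F[u(\cdot,t)](\lambda)$, the problem becomes, for each fixed $\lambda\in\mathbb R$, a scalar fractional Cauchy problem
\begin{equation*}
\partial_{0t}^\beta \hat u(\lambda,t)+P(\lambda)\hat u(\lambda,t)=\hat g(\lambda,t),\qquad \hat u(\lambda,0)=\hat\varphi(\lambda),
\end{equation*}
with symbol
\begin{equation*}
P(\lambda)=a^2\lambda^4-c\lambda^2+k+i(d\lambda-b\lambda^3).
\end{equation*}
Its solution is the standard Mittag-Leffler representation
\begin{equation*}
\hat u(\lambda,t)=\hat\varphi(\lambda)E_\beta(-P(\lambda)t^\beta)+\int\limits_0^t(t-\tau)^{\beta-1}E_{\beta,\beta}(-P(\lambda)(t-\tau)^\beta)\hat g(\lambda,\tau)\,d\tau .
\end{equation*}
We then set $u=\mathbf F^{-1}[\hat u]$. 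For uniqueness we note that the difference $w$ of two solutions in the stated class has $\hat w(\lambda,\cdot)$ solving the homogeneous scalar problem with zero data. Since $\hat w(\lambda,\cdot)$ is absolutely continuous, the scalar problem has only the trivial solution; for instance, apply Lemma \ref{LemmAli} to $|\hat w|^2$ and then Lemma \ref{GRONWALL} with $c_2=0$. Hence $\hat w\equiv0$, and so $w\equiv 0$ by Theorem \ref{thm1}.

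For existence, the key step is uniform control of $E_\beta(-P(\lambda)t^\beta)$, $E_{\beta,\beta}(-P(\lambda)t^\beta)$ and their $\lambda$-derivatives.

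First, $\mathrm{Re}\,P(\lambda)=a^2\lambda^4-c\lambda^2+k\ge -K_0$ for some constant $K_0\ge 0$. Moreover, $|\mathrm{Im}\,P|/\mathrm{Re}\,P\sim |b|/(a^2|\lambda|)\to0$ as $|\lambda|\to\infty$. So for $|\lambda|\ge R$ the point $-P(\lambda)t^\beta$ lies in the sector $\mu\le|\arg z|\le\pi$, and Lemma \ref{Lemm} gives
\begin{equation*}
|E_{\beta,\gamma}(-P(\lambda)t^\beta)|\le \frac{C}{1+|P(\lambda)|t^\beta}\le C .
\end{equation*}
On the compact set $|\lambda|\le R$, $t\in[0,T]$, boundedness is immediate from the entire power series.

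Differentiating in $\lambda$ produces factors $P'(\lambda)t^\beta$ times Mittag-Leffler functions $E_{\beta,\gamma}$ of other second indices, by term-by-term differentiation of the series. Each such factor is again controlled by Lemma \ref{Lemm}. The resulting bound is
\begin{equation*}
|\partial_\lambda^j E_{\beta,\gamma}(-P(\lambda)t^\beta)|\le C_j(1+|\lambda|)^{N_j},
\end{equation*}
uniformly in $t\in[0,T]$, so these kernels are smooth multipliers with polynomial growth of all derivatives. Multiplication by such functions preserves $S(\mathbb R)$ continuously, uniformly in $t\in[0,T]$.

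The integrable singularity $(t-\tau)^{\beta-1}$ then shows that both terms of $\hat u$ lie in $C([0,T],S(\mathbb R))$. Continuity in $t$ follows by dominated convergence using the Schwartz decay of $\hat\varphi$ and $\hat g$. Applying Theorem \ref{thm1} gives $u\in SC(T)$, and this step also justifies differentiating under the inverse Fourier integral.

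Next we check the Caputo derivative. From the scalar equation, $\partial_{0t}^\beta\hat u=\hat g-P\hat u$, which lies in $C([0,T],S)$ because $P$ is a polynomial, so $u\in SC_\beta(T)$. Substituting back shows that $u$ satisfies \eqref{chiziqli tenglama} pointwise.

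For absolute continuity in $t$ with values in $S$, we use \eqref{hosila}:
\begin{equation*}
\partial_t E_\beta(-Pt^\beta)=-Pt^{\beta-1}E_{\beta,\beta}(-Pt^\beta).
\end{equation*}
This is integrable near $t=0$ with $S$-valued bounds. The convolution term is handled the same way, writing it as $I_{0t}^{\beta}$ applied to a regular kernel.

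The main obstacle I expect is the uniform-in-$\lambda$ polynomial bounds on all $\lambda$-derivatives of the Mittag-Leffler multipliers. This requires checking that the complex argument $-P(\lambda)t^\beta$ stays in the sector where Lemma \ref{Lemm} applies for large $|\lambda|$ despite the dispersive terms $b,d$, and handling the bounded region (including possibly negative $\mathrm{Re}\,P$) separately by compactness.
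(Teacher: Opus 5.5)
Your route is the same as the paper's: Fourier transform in $x$, the Mittag-Leffler representation of $\hat u$, multiplier bounds from Lemma \ref{Lemm}, and inversion via Theorem \ref{thm1}. Your sector argument, the polynomial bounds on $\lambda$-derivatives and the explicit uniqueness argument are more careful than the paper's. Two small corrections. First, $\frac{d}{dz}E_{\beta,\gamma}(z)$ is not itself a two-parameter Mittag-Leffler function; use $\beta zE_{\beta,\gamma}'(z)=E_{\beta,\gamma-1}(z)-(\gamma-1)E_{\beta,\gamma}(z)$ instead, which gives the bounds you want. Second, since $\hat w$ is complex, apply Lemma \ref{LemmAli} to $\mathrm{Re}\,\hat w$ and $\mathrm{Im}\,\hat w$ separately.

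The step that fails is absolute continuity of the Duhamel term. The multiplier bounds you flagged as the main obstacle are fine. Writing the Duhamel term as $I^\beta_{0t}$ applied to a continuous function does not give absolute continuity, because $I^\beta_{0t}$ does not map $C[0,T]$ into $AC[0,T]$. Indeed, $\|\frac{d}{dt}I^\beta_{0t}\cos\omega t\|_{L^1(0,T)}\asymp\omega^{1-\beta}$ while $\|\cos\omega t\|_\infty=1$. By the closed graph theorem, some real $h\in C[0,T]$ therefore has $I^\beta_{0t}h\notin AC[0,T]$.

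This gives a counterexample to the claim, not just a missing estimate. Take $\varphi=0$ and $g(x,t)=\psi(x)h(t)$ with $\psi\in S(\mathbb R)$, and fix $\lambda$ with $\hat\psi(\lambda)\neq0$. Suppose $u\in AC([0,T];S(\mathbb R))$. Since $v\mapsto\hat v(\lambda)$ is continuous on $S(\mathbb R)$, the function $y(t)=\hat u(\lambda,t)/\hat\psi(\lambda)$ is absolutely continuous with $y(0)=0$. Then, almost everywhere,
$h=I^{1-\beta}_{0t}y'+P(\lambda)y=I^{1-\beta}_{0t}\bigl(y'+P(\lambda)I^{\beta}_{0t}y'\bigr)$.
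Hence $I^\beta_{0t}h=I^1_{0t}\bigl(y'+P(\lambda)I^\beta_{0t}y'\bigr)$ would be absolutely continuous, a contradiction.

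The paper's proof only asserts $\hat u\in AC([0,T];S(\mathbb R))$ ``from the representation'', so it has the same gap. Under $g\in SC(T)$ alone, what can actually be proved is existence and uniqueness in $SC_\beta(T)$. Uniqueness there follows from the Volterra equation $\hat w=-P(\lambda)I^\beta_{0t}\hat w$, which needs no absolute continuity. Absolute continuity requires extra time regularity, for example $g\in AC([0,T];S(\mathbb R))$. In that case, with $K(s)=s^{\beta-1}E_{\beta,\beta}(-P(\lambda)s^\beta)$, the Duhamel term $\int_0^tK(s)\hat g(\lambda,t-s)\,ds$ has $t$-derivative $K(t)\hat g(\lambda,0)+\int_0^tK(s)\partial_t\hat g(\lambda,t-s)\,ds$, whose $S$-seminorms are integrable on $[0,T]$.
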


\begin{proof}
We prove that the Cauchy problem has a solution using Fourier transforms. Applying the Fourier transform on $x$ (Definition \ref{1def3}), we get the following
\begin{equation}\label{FT1}\partial _{0t}^{\beta }\hat{u}(\lambda ,t)+P(\lambda)\hat{u}(\lambda ,t)=\hat{g}(\lambda ,t), \end{equation}
\begin{equation}\label{FT2} \hat{u}(\lambda,0)=\hat \varphi(\lambda ), \end{equation}	
where  $P(\lambda)=a^2\lambda^4 - ib\lambda^3 -c\lambda^2 +id\lambda+k$, functions $\hat{u}(\lambda ,t),\,\,\hat{g}(\lambda ,t),\,\,\hat \varphi(\lambda )$ are, respectively, images of functions $u(x,t),\,g(x,t),\,\varphi(x)$. 

 The solution to the Cauchy  problem \eqref{FT1}-\eqref{FT2} is as follows (\cite{Kilbas2006},p.141)
\begin{equation}\label{yechim}\hat{u}(\lambda ,t)=\hat{\varphi}(\lambda )\cdot {{E}_{\beta ,1}}\left(-P(\lambda){{t}^{\beta}}\right)+\int\limits_{0}^{t}{\hat{g}(\lambda ,\tau ){{(t-\tau )}^{\beta -1}}{{E}_{\beta ,\beta }}\left(-P(\lambda)(t-\tau )^\beta\right)d\tau }.\end{equation}		

It is clear that the real part of the function $P(\lambda)$ is positive for large $|\lambda|$. 


According to Theorem \ref{thm1} we have 
\begin{equation}\label{M-L-bahosi}
\left|\frac{d^n}{dz^n}\left(z^{\beta-1}E_{\beta,\beta}\left(z^{\beta}\right)\right)\right|=\left|z^{\beta-n-1}E_{\beta,\beta-n}\left(z^\beta \right)\right|\leq\frac{C}{1+|z|^{1+n}}, 
\end{equation}
for large $|z|$ and $Re(z)<0$. 

The derivatives of the function $E_{\beta,1}(z)$ can be estimated similarly. So, we can conclude, that the derivatives of the Mittag-Leffler functions ${{E}_{\beta ,\beta }}(z)$ and ${{E}_{\beta ,1}}({{z}})$ are bounded if $Re(z)<0$.

So, taking into account $\hat u_0(\lambda)\in S(\mathbb R)$ and $\hat g(\lambda,t)\in SC[0,T]$ we conclude that $\hat u\in SC_{\beta}[0,T]$. From representation \eqref{yechim} we also conclude that $\hat u\in AC([0,T];S(\mathbb R))$.

Now, according to properties of the direct and inverse Fourier transforms \cite{Elias}, we conclude that the Cauchy problem \eqref{chiziqli tenglama}-\eqref{3-tenglama} has an unique solution $u(x,t)\in SC_\beta (T)\cap AC([0,T];S(\mathbb R))$.
\end{proof}

 \section{LOCAL SOLVABILITY OF THE CAUCHY PROBLEM FOR THE NONLINEAR EQUATION} 

 In this section, we consider the nonlinear time-fractional Kuramoto-Sivashinsky equation 
\begin{equation}\label{1-tenglamasolvability}\partial _{0t}^{\beta }u+Lu+\gamma uu_x=f(x,t), \quad x\in \mathbb R,\quad 0<   t\le T,\end{equation}
where $0<\beta <1$,  $Lu:=a^2u_{xxxx}+bu_{xxx}+cu_{xx}+du_x+ku,$
with the initial condition 
\begin{equation}u(x,0)=\varphi(x), \quad x\in \mathbb R,\quad 0\leq t\le T. \end{equation}

\begin{thm} \label{nonlinear} (Local sovability). 
Let ${\varphi (x)}\in S(\mathbb R)$ and ${f(t,x)}\in SC(T)$, then 
the Cauchy problem is solvable in the space $SC_\beta(t_1)$, where $t_1$ depends on $ \|\varphi\|_2,\max_{0\leq t\leq T} \|f(\cdot,t)\|_2$ and the coefficient of the equation, where  
$$\|v\|_2^2 = \int\limits_{-\infty}^{+\infty}\left(v^2 (x)+\left(\frac{\partial^2 v(x)}{\partial x^2}\right)^2\right)dx.$$
\end{thm}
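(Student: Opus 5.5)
We argue by successive approximations built on Theorem \ref{linear}. Put $u_0(x,t)=\varphi(x)$ and, for $n\ge 1$, let $u_n\in SC_\beta(T)\cap AC([0,T];S(\mathbb R))$ be the unique solution of the linear problem
\begin{equation*}
\partial_{0t}^\beta u_n+Lu_n=f-\gamma u_{n-1}\partial_x u_{n-1},\qquad u_n(x,0)=\varphi(x).
\end{equation*}
Since $S(\mathbb R)$ is an algebra stable under differentiation, $u_{n-1}\in SC(T)$ gives $u_{n-1}\partial_xu_{n-1}\in SC(T)$. Hence Theorem \ref{linear} applies at every step and the sequence is well defined in $SC_\beta(T)$.

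\textbf{Step 1: a uniform bound in $\|\cdot\|_2$ on a short interval.} Put $y_n(t)=\|u_n(\cdot,t)\|_2^2$. Multiply the equation for $u_n$ by $u_n$ and by $\partial_x^4u_n$, integrate in $x$, and use Lemma \ref{LemmAli} for the fractional derivative. After integrating by parts:
\begin{itemize}
\item the odd-order terms $bu_{xxx}$ and $du_x$ contribute nothing;
\item $a^2u_{xxxx}$ gives the dissipative terms $a^2\|u_{n,xx}\|^2$ and $a^2\|\partial_x^4u_n\|^2$;
\item the terms with $c$ and $k$ are absorbed by interpolation, $\|u_x\|^2\le\|u\|\|u_{xx}\|$ and Young's inequality with $\varepsilon$.
\end{itemize}
The nonlinear term is paired with $\partial_x^4u_n$ and bounded via the one-dimensional embedding $\|v\|_\infty+\|v_x\|_\infty\le C\|v\|_2$:
\begin{equation*}
\Bigl|\int u_{n-1}u_{n-1,x}\,\partial_x^4u_n\,dx\Bigr|\le \varepsilon\|\partial_x^4u_n\|^2+C_\varepsilon\|u_{n-1}\|_2^4 .
\end{equation*}
The forcing is handled the same way, and $\varepsilon$ is chosen so that the $\|\partial_x^4u_n\|^2$ term is absorbed by $a^2\|\partial_x^4u_n\|^2$. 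This yields
\begin{equation*}
\partial_{0t}^\beta y_n\le C_1y_n+C_2\bigl(y_{n-1}^2+F\bigr),\qquad F=\max_{t}\|f(\cdot,t)\|_2^2 .
\end{equation*}
Lemma \ref{GRONWALL} then gives $y_n(t)\le y(0)E_\beta(C_1t^\beta)+C t^\beta E_{\beta,\beta}(C_1t^\beta)\bigl(F+\max_{[0,t]}y_{n-1}^2\bigr)$.

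Let $R=2\bigl(\|\varphi\|_2^2+1\bigr)$. Choose $t_1$, depending only on $\|\varphi\|_2$, $F$ and the coefficients, so that the right-hand side is at most $R$ whenever $\max y_{n-1}\le R$. By induction, $\sup_n\max_{[0,t_1]}y_n\le R$.

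\textbf{Step 2: higher semi-norms.} With the $\|\cdot\|_2$ bound fixed, we bound $\max_{[0,t_1]}|||u_n|||_{m,s}$ uniformly in $n$ by induction on $m$ and $s$.
\begin{itemize}
\item \emph{Derivatives.} Test the $\partial_x^m$-differentiated equation with $\partial_x^m u_n$ (derivative order raised stepwise by $2$). The nonlinearity, expanded by Leibniz, is linear in the top-order derivative with coefficients already bounded, so Lemma \ref{GRONWALL} gives a Gronwall bound on the same interval $t_1$.
\item \emph{Weights.} Test with $(1+x^2)^su_n$. The commutators of $L$ with the weight produce terms $x^{2j}(\partial_x^ku_n)^2$, which are absorbed via \eqref{Asosiy lemma} into $M_\varepsilon+\varepsilon\int x^{2s}u_n^2$.
\end{itemize}
Both estimates are linear in the new unknown, so the time interval does not shrink. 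Lemma \ref{yangi lemma1} is used where the bound for $u_n$ involves $u_{n-1}$ in the same semi-norm. The same bounds hold for $\partial_{0t}^\beta u_n$ by reading them off the equation.

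\textbf{Step 3: convergence.} Set $w_n=u_{n+1}-u_n$. Then
\begin{equation*}
\partial_{0t}^\beta w_n+Lw_n=-\gamma\bigl(w_{n-1}u_{n,x}+u_{n-1}w_{n-1,x}\bigr),\qquad w_n(x,0)=0 .
\end{equation*}
The same energy argument, using the uniform bounds of Steps 1–2, gives $z_n\le b\,I_{0t}^\beta z_{n-1}$ for $z_n=\max\|w_n\|^2$ in each semi-norm. Lemma \ref{yangi lemma1} with $a=0$ then gives $z_n\le b^n t^{n\beta}\max z_0/\Gamma(n\beta+1)$, which is summable. Hence $u_n$ is Cauchy in every semi-norm, so it converges in $SC_\beta(t_1)$ by completeness. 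Passing to the limit in the equation gives the solution.

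\textbf{Main obstacle.} The hardest part is Step 1: closing the nonlinear estimate so that $t_1$ depends only on $\|\cdot\|_2$ quantities. Step 2 carries the secondary difficulty of keeping the weighted estimates closed through \eqref{Asosiy lemma} without losing time.
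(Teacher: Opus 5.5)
Your proposal is correct and follows essentially the same route as the paper: successive approximations via Theorem~\ref{linear}; the $\|\cdot\|_2$ energy estimate (your test with $\partial_x^4u_n$ is the paper's test of the twice-differentiated equation with $u_{n,xx}$), closed by Lemma~\ref{GRONWALL} and an inductive smallness choice of $t_1$; then linear Gronwall bounds for higher derivatives and weights using Lemma~\ref{yangi lemma1}; and finally the $L^2$ difference estimate $\|w_n\|^2\le K\,I_{0t}^\beta\|w_{n-1}\|^2$, iterated to a summable tail. The only real deviation is that you absorb the weighted commutator terms through \eqref{Asosiy lemma} instead of the paper's elementary interpolation inequalities. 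That works for the a priori bounds, but for the difference estimates in Step~3 you need homogeneous interpolation, because the additive constant $M_\varepsilon$ would destroy the recursion $z_n\le b\,I_{0t}^\beta z_{n-1}$.
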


The proof of the theorem is based on a priori estimates. 

We consider the consecutive equation for $i\geq 1$ 
\begin{equation}\label{regulyarizatsiya}\partial _{0t}^{\beta }{{u}_{i}} +Lu_i=f(x,t)-\gamma{{u}_{i-1}}{{u}_{i-1,x}},\ \ x\in\mathbb R,\quad 0<t<T,\end{equation}			
\begin{equation}{{u}_{i}}(x,0)=\varphi(x), \quad x\in\mathbb R,\end{equation}
 $u_0(x,t)=\varphi(x).$
The existence of a solution  follows from Theorem \ref{linear} for each $i$. Now we show that the sequence converges in the topology defined by the semi-norms \eqref{yarim norma}.

So, we need to obtain countably many a priori estimates in the semi-norms \eqref{yarim norma}. Based on these estimates, at the end, we show that  $\{u_i\}$ is a Cauchy sequence. 

We put 
$$\|v\|^2=\int\limits_{-\infty}^{+\infty}v^2(x)dx.$$

 \subsection{Estimate for $\|u\|^2+\|u_{xx}\|^2$.}
     
\begin{lem} \label{norma2}
    There exists a number $t_1>0$ dependent on $ \|\varphi\|_2, \max_{0\leq t\leq T}\|f(\cdot,t)\|_2$, such that for $i>2$
    $$\|u_i\|^2+\|u_{ixx}\|^2\leq const <+\infty,$$
    for all $0\leq t\leq t_1$, where the constant does not depend on $t$ and $i$.
\end{lem}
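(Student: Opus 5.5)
The plan is to derive an energy inequality for $y_i(t):=\|u_i(\cdot,t)\|^2+\|u_{ixx}(\cdot,t)\|^2$ that involves the previous iterate only through a polynomial in $y_{i-1}$. We then close by induction on a short time interval.

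\textbf{Step 1: the basic identities.} Multiply \eqref{regulyarizatsiya} by $u_i$ and integrate over $\mathbb R$. Then differentiate \eqref{regulyarizatsiya} twice in $x$, multiply by $u_{ixx}$ and integrate. Since $u_i\in SC_\beta\cap AC([0,T];S(\mathbb R))$ by Theorem \ref{linear}, all integrations by parts are legitimate with no boundary terms. The odd-order terms then drop out:
$$\int u_{ixxx}u_i\,dx=\int u_{ix}u_i\,dx=0,$$
and likewise in the differentiated equation. The remaining terms behave as follows.
\begin{itemize}
\item The Caputo term is handled pointwise in $x$ by Lemma \ref{LemmAli}. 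After integrating in $x$, this gives $\int u_i\partial_{0t}^\beta u_i\,dx\ge \frac12\partial_{0t}^\beta\|u_i\|^2$, and similarly for $u_{ixx}$.
\item The fourth-order term gives the dissipation $a^2\|u_{ixx}\|^2$, respectively $a^2\|\partial_x^4u_i\|^2$.
\item The term $c u_{xx}$ is absorbed. In the second identity $|c|\,|\int \partial_x^4u_i\,u_{ixx}\,dx|\le \frac{a^2}{4}\|\partial_x^4u_i\|^2+C\|u_{ixx}\|^2$. In the first it is bounded by $C(\|u_i\|^2+\|u_{ixx}\|^2)$.
\item The $k$ terms are bounded by $C y_i$.
\item The source terms are bounded via Young's inequality: $|\int f u_i|\le \frac12\|f\|^2+\frac12\|u_i\|^2$, and $|\int f_{xx}u_{ixx}|$ is moved onto $\partial_x^4u_i$ after one integration by parts, to avoid requiring two derivatives of $f$.
\end{itemize}

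\textbf{Step 2: the nonlinear terms.} For the nonlinearity in the first identity, $|\int u_{i-1}u_{i-1,x}u_i\,dx|\le \|u_{i-1}\|_\infty\|u_{i-1,x}\|\,\|u_i\|$. In the second identity we integrate by parts once, so that
$$\int (u_{i-1}u_{i-1,x})_{xx}u_{ixx}\,dx=-\int (u_{i-1}u_{i-1,x})_x\,\partial_x^3u_i\,dx .$$
The factor $(u_{i-1}u_{i-1,x})_x=u_{i-1,x}^2+u_{i-1}u_{i-1,xx}$ lies in $L^2$ with norm $\le C y_{i-1}$. This uses the Gagliardo–Nirenberg/Sobolev bounds $\|v\|_\infty+\|v_x\|_\infty\le C(\|v\|+\|v_{xx}\|)$ and $\|v_x\|^2\le\|v\|\,\|v_{xx}\|$, both obtained from the Fourier transform. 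We then interpolate $\|\partial_x^3u_i\|\le\|u_{ixx}\|^{1/2}\|\partial_x^4u_i\|^{1/2}$ and apply Young's inequality with $\varepsilon$, absorbing $\frac{a^2}{4}\|\partial_x^4u_i\|^2$. Summing the two identities yields
$$\partial_{0t}^\beta y_i(t)\le c_1 y_i(t)+c_2\bigl(1+\max_{t}\|f\|_2^2+y_{i-1}^2(t)\bigr).$$
If the authors' $f$-norm is only $\|f\|_2$, we alternatively keep $\|f_{xx}\|$; this is harmless because $f\in SC(T)$.

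\textbf{Step 3: the Gronwall closure.} Apply Lemma \ref{GRONWALL} with $y_i(0)=\|\varphi\|_2^2$. This gives
$$y_i(t)\le \|\varphi\|_2^2E_\beta(c_1t^\beta)+\Gamma(\beta)E_{\beta,\beta}(c_1t^\beta)\,I_{0t}^\beta\bigl[c_2(1+F+y_{i-1}^2)\bigr],$$
where $F=\max_t\|f\|_2^2$. Choose $R=2\|\varphi\|_2^2+1$. Then choose $t_1$ so small that $\|\varphi\|_2^2E_\beta(c_1t_1^\beta)\le \frac32\|\varphi\|_2^2$ and
$$\Gamma(\beta)E_{\beta,\beta}(c_1t_1^\beta)\,c_2(1+F+R^2)\,\frac{t_1^\beta}{\Gamma(\beta+1)}\le \tfrac12 .$$
By induction, if $y_{i-1}\le R$ on $[0,t_1]$ then $y_i\le R$ there. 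The base case holds because $y_0=\|\varphi\|_2^2$; the restriction $i>2$ in the statement is only cosmetic. Hence $t_1$ depends only on $\|\varphi\|_2$, $F$ and the coefficients.

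\textbf{Main obstacle.} The quadratic growth in $y_{i-1}$ in Step 2 is the point that forces the result to be local rather than global. The delicate part is distributing derivatives so that no more than $\partial_x^4u_i$ and $\partial_x^2u_{i-1}$ appear. I would first try the single integration by parts above combined with the Sobolev embeddings.
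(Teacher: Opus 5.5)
Your proposal is correct and follows essentially the paper's route. Both use energy identities for $\|u_i\|^2$ and $\|u_{ixx}\|^2$ with Alikhanov's inequality, absorb into the dissipation $a^2\|\partial_x^4u_i\|^2$, bound the nonlinearity quadratically in the previous iterate, apply the fractional Gronwall lemma, and close by induction on a ball for small $t_1$. The only differences are cosmetic: the paper integrates the nonlinear term by parts twice onto $\partial_x^4u_i$ rather than once onto $\partial_x^3u_i$ followed by interpolation, and moving $f_{xx}$ onto $\partial_x^4u_i$ takes two integrations by parts, not one.
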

\begin{proof}

Multiply equation \eqref{regulyarizatsiya} by $2{{u}_{i}}$ and integrate with respect to $x$ over ${\mathbb R}$
\begin{equation}\label{tenglama2}2\int\limits_{-\infty }^{+\infty }{{{u}_{i}}\partial _{0t}^{\beta }{{u}_{i}}dx}
 +2\int\limits_{-\infty }^{+\infty }{{u}_{i}}{{{Lu}_{i}}dx}=-2\gamma\int\limits_{-\infty }^{+\infty }{{{u}_{i}}\frac{\partial u_{i-1} }{\partial x}{{u}_{i-1}}dx}+2\int\limits_{-\infty }^{+\infty }{f(x,t){{u}_{i}}dx}.\end{equation}	

Now we estimate the terms on the left

$$2\int\limits_{-\infty }^{+\infty }{{{u}_{i}}\partial _{0t}^{\beta }{{u}_{i}}dx\ge \partial _{0t}^{\beta }{{\left\| {{u}_{i}} \right\|}^{2}}},$$
$$\int\limits_{-\infty }^{+\infty }{{{u}_{i}}{{u}_{ixxx}}dx=0},\ \ \int\limits_{-\infty }^{+\infty }{{{u}_{i}}{{u}_{ix}}dx=0},$$
$$\int\limits_{-\infty }^{+\infty }{{{u}_{i}}{{u}_{ixxxx}}dx= {{\left\| {{u}_{ixx}} \right\|}^{2}}}, \ \ 
-2\int\limits_{-\infty }^{+\infty }{{u}_{i}}{{u}_{ixx}}dx\leq \|u_i\|^2+\|u_{ixx}\|^2.$$

We now examine the first term on the right-hand side of the equation  
\begin{equation}\label{QR1}
Q_0=\left|-2\gamma\int\limits_{-\infty }^{+\infty }{{{u}_{i}}\frac{\partial u_{i-1} }{\partial x}{{u}_{i-1}}dx}\right|\le2|\gamma |\sup_{x\in \mathbb R} \left| {{u}_{i-1}} \right|\cdot \int\limits_{-\infty }^{+\infty }{\left| {{u}_{i}} \right|}\left| \frac{\partial {{u}_{i-1}} }{\partial x} \right|dx.\end{equation}		

Now we estimate	$\sup_{x\in \mathbb R}\left|u_{i-1}(x,t)\right|.$ It is known that

\begin{equation} \label{1}
u_{i-1}^{2}(x,t)=2\int\limits_{-\infty }^{x}{\frac{\partial {{u}_{i-1}}(\xi ,t)}{\partial \xi}\cdot {{u}_{i-1}}}(\xi ,t )d\xi.\end{equation}

Applying the Cauchy-Schwarz inequality, we obtain
 
\begin{equation}\label{QR2}
\underset{x\in {{\mathbb R}}}{\mathop{\sup }}\,\left| {{u}_{i-1}}(x,t) \right|\le {{\left( 2\int\limits_{-\infty }^{+\infty }{\left| \frac{\partial {{u}_{i-1}}}{\partial x} \right|\cdot \left| {{u}_{i-1}} \right|dx} \right)}^{\frac{1}{2}}}\le {{\left( 4\int\limits_{-\infty }^{+\infty }{{{\left( \frac{\partial {{u}_{i-1}}}{\partial x} \right)}^{2}}dx\cdot \int\limits_{-\infty }^{+\infty }{u_{i-1}^{2}dx}} \right)}^{\frac{1}{4}}}.
\end{equation}
Using \eqref{QR1} and \eqref{QR2}, we arrive at  
$$Q_0\leq 2\gamma \sqrt 2\left(\int\limits_{-\infty}^{+\infty}u_i^2dx\right)^{\frac{1}{2}}\cdot\left(\int\limits_{-\infty}^{+\infty}\left(\frac{\partial u_{i-1}}{\partial x}\right)^2 dx\right)^{\frac{3}{4}}\cdot \left(\int\limits_{-\infty}^{+\infty}u_{i-1}^2 dx\right)^{\frac{1}{4}}.$$
For any function $ \forall \vartheta (x)\in S(\mathbb R),$ the following relation holds:

\begin{equation}\int\limits_{-\infty }^{+\infty }{{{\left( {\vartheta }'(x) \right)}^{2}}dx=-\int\limits_{-\infty }^{+\infty }{\vartheta (x){\vartheta }''(x)dx\le {{\left( \int\limits_{-\infty }^{+\infty }{{{\vartheta }^{2}}(x)dx\cdot {{\int\limits_{-\infty }^{+\infty }{\left( {\vartheta }''(x) \right)^2}}}dx} \right)}^{\frac{1}{2}}}}}. \end{equation}	

Therefore, using the Cauchy and H\"older inequalities on the right-hand side of \eqref{QR1}, we can obtain the following estimate.
$${{Q}_{0}}\leq 2\gamma\sqrt{2}{{\left( \int\limits_{-\infty }^{+\infty }{u_{i}^{2}dx} \right)}^{\frac{1}{2}}}\cdot {{\left( \int\limits_{-\infty }^{+\infty }{{{\left( \frac{{{\partial }^{2}}{{u}_{i-1}}}{\partial {{x}^{2}}} \right)}^{2}}dx} \right)}^{\frac{3}{8}}}\cdot {{\left( \int\limits_{-\infty }^{+\infty }{u_{i-1}^{2}dx} \right)}^{\frac{5}{8}}}$$
 $$\le \int\limits_{-\infty }^{+\infty }{u_{i}^{2}dx+}{{c}_{0}}\left( {{\left( \int\limits_{-\infty }^{+\infty }{{{\left( \frac{{{\partial }^{2}}{{u}_{i-1}}}{\partial {{x}^{2}}} \right)}^{2}}dx} \right)}^{2}}+{{\left( \int\limits_{-\infty }^{+\infty }{u_{i-1}^{2}dx} \right)}^{2}} \right).$$ 
Using the above inequalities, we get 
\begin{equation}
    \label{QR3}
\partial _{0t}^{\beta }{{\left\| {{u}_{i}} \right\|}^{2}}+2a^2 {{\left\| {{u}_{ixx}} \right\|}^{2}}
\leq c_1 \left( \|u_{i}\|^{2} +\|u_{ixx}\|^{2}\right)
+{{c}_{0}}\left[  \| {u}_{i-1,xx}\|^2 +\|{u_{i-1}\|^{2}} \right]^2+ \|{{f}\|^{2}},
\end{equation}
where the positive constants $c_1$ and $c_0$ depend only on the coefficients of the equation \eqref{1-tenglama}.
 
Now, for equation \eqref{regulyarizatsiya}, we take the second order derivative with respect to $x$ in each term, then multiply each term by $2\frac{{{\partial }^{2}}{{u}_{i}}}{\partial {{x}^{2}}}$ and integrate over ${\mathbb{R}}$ with respect to $x$.

$$2\int\limits_{-\infty }^{+\infty }{\frac{{{\partial }^{2}}}{\partial {{x}^{2}}}\partial _{0t}^{\beta }{{u}_{i}}\cdot \frac{{{\partial }^{2}}{{u}_{i}}}{\partial {{x}^{2}}}dx
+2\int\limits_{-\infty }^{+\infty }{\frac{{{\partial }^{2}}}{\partial {{x}^{2}}}}}Lu_i\cdot \frac{{{\partial }^{2}}{{u}_{i}}}{\partial {{x}^{2}}}dx=$$

\begin{equation}    \label{QR4}
=-2\gamma\int\limits_{-\infty }^{+\infty }{\frac{{{\partial }^{2}}}{\partial {{x}^{2}}}\left({{u}_{i-1}}{{u}_{i-1,x}}\right)\frac{{{\partial }^{2}}u_i}{\partial {{x}^{2}}}dx}+2\int\limits_{-\infty }^{+\infty }{\frac{{{\partial }^{2}}f}{\partial {{x}^{2}}}\cdot \frac{{{\partial }^{2}}u_i}{\partial {{x}^{2}}}dx}.\end{equation}

We now estimate the terms on the left side of \eqref{QR4}.
$$2\int\limits_{-\infty }^{+\infty }{\frac{{{\partial }^{2}}}{\partial {{x}^{2}}}\partial _{0t}^{\beta }{{u}_{i}}\frac{{{ \partial }^{2}}u_i}{\partial {{x}^{2}}}dx\ge \partial _{0t}^{\beta }{{\int\limits_{-\infty }^{+\infty }{\left( \frac{{{\partial }^{2}}u_i}{\partial {{x}^{2}}} \right)}}^{2}}dx\ge \partial _{0t}^{\beta }{{\left\| {{u}_{ixx}} \right\|}^{2}}},$$

$$\int\limits_{-\infty }^{+\infty }{\frac{{{\partial }^{5}}u_i}{\partial {{x}^{5}}}\frac{{{\partial }^{2}}u_i}{\partial {{x}^{2}}}dx=0}, \int\limits_{-\infty }^{+\infty }{\frac{{{\partial }^{3}}u_i}{\partial {{x}^{3}}}\frac{{{\partial }^{2}}u_i}{\partial {{x}^{2}}}dx=0}, $$
$$ \int\limits_{-\infty }^{+\infty }{\frac{{{\partial }^{6}}u_i}{\partial {{x}^{6}}}\frac{{{\partial }^{2}}u_i}{\partial {{x}^{2}}}dx=  \int\limits_{-\infty }^{+\infty }{u_{ixxxx}^{2}dx= {{\left\| {{u}_{ixxxx}} \right\|}^{2}},}}$$

$$ -2\int\limits_{-\infty }^{+\infty }\frac{{{\partial }^{4}}u_i}{\partial {{x}^{4}}}\frac{{{\partial }^{2}}u_i}{\partial {{x}^{2}}}dx=  2\int\limits_{-\infty }^{+\infty }u_{ixxx}^{2}dx
\leq \sigma_1\|u_{ixxxx}\|^2+\frac{1}{\sigma_1}\|u_{ixx}\|^2,$$
where the constant $\sigma_1>0$ and its appropriate value will be chosen later.

So, we have 
\begin{equation}
    \label{QR5}
\partial _{0t}^{\beta }{{\left\| {{u}_{ixx}} \right\|}^{2}} +(2a^2-|c|\sigma_1){\left\| {{u}_{ixxxx}} \right\|}^{2}
\leq  c_3 \|u_{ixx}\|^{2}+ 2|\gamma|\int\limits_{-\infty}^{+\infty}\left(u_{i-1}u_{i-1,x}\right)\frac{\partial^4 u_i}{\partial x^4}dx
+2\int\limits_{-\infty }^{+\infty }\frac{{{\partial }^{2}}}{\partial x^2} f(x,t)\frac{{{\partial }^{2}}u_i}{\partial {{x}^{2}}}dx.\end{equation}

We estimate the first term on the right side of \eqref{QR5} in a similar way to \eqref{QR3}.
$$\int\limits_{-\infty}^{+\infty}u_{i-1}u_{i-1,x}\frac{\partial^4 u_i}{\partial x^4}dx\leq \sigma_2\int\limits_{-\infty}^{+\infty}\left(\frac{\partial^4 u_i}{\partial x^4}\right)^2 dx+
C_3 (\sigma_2)\left( \int\limits_{-\infty}^{+\infty}\left(\frac{\partial^2 u_{i-1}}{\partial x^2}\right)^2 dx+\int\limits_{-\infty}^{+\infty} u_{i-1}^2 dx\right)^2.$$

Now, choosing $\sigma_1$ and $\sigma_2$ such that $2a^2-|c|\sigma_1-2|\gamma|\sigma_2>0$, we get

\begin{equation}\label{ikkinchi baho}\partial _{0t}^{\beta }\left\| {u}_{ixx} \right\|^{2} 
\leq c_4 \left( \|u_{i}\|^{2} +\|u_{ixx}\|^{2}\right) +c_5 \left[ \|u_{i-1}\|^{2} +\|u_{i-1,xx}\|^{2}\right]^2
+\|f_{xx}\|^2.\end{equation}

From \eqref{QR3} and \eqref{ikkinchi baho} we obtain
$$\partial_{0t}^\beta\left(\|{u_i\|^2+\|u_{ixx}\|^2}\right) 
\leq c_6\left(\|{u_i\|^2+\|u_{ixx}\|^2}\right)+c_7\left(\|{u_{i-1}\|^2+\|u_{i-1,xx}\|^2}\right)^2
+ c_8\left(\|f\|^2 +\|f_{xx}\|^2\right),$$
where the positive constants $c_6, c_7$ and $c_8$  depend only on the coefficients of the equations. 


 We introduce the following notation:

 $$U_0=\|\varphi\|^2_2,\quad  F(t) = \|f\|^2_2, \quad {{U}_{i}}(t)=\|u_i\|^2_2,  \quad i\geq 1.$$

We have 
\begin{equation}\label{tenglama5}\partial _{0t}^{\beta }U_i\le c_6U_i+ c_7{U_{i-1}^{2}}+c_8F.\end{equation}

Using Gronwall-Bellmann's inequality from Lemma \ref{LemmAli}, we get

$$U_i(t) \leq U_0 E_\beta (c_6 t^\beta)+\Gamma(\beta)E_{\beta,\beta}(c_6 t^\beta)I_{0t} ^\beta \left({c_7}{U_{i-1}^{2}}+c_8F\right)$$

Then, for $0<t_1<T$, $i\geq 1$ we have 
\begin{equation}\max_{0\leq t\leq t_1} {U_i(t)}
\leq \frac{c_7t_1^\beta}{\beta}E_{\beta,\beta}(c_6 T^\beta)\max_{0\leq t\leq t_1} U_{i-1}^2
+\frac{c_8T^\beta}{\beta}E_{\beta,\beta}(c_6 T^\beta)\max_{0\leq t\leq T}F(t)
+ U_0E_\beta (c_6 T^\beta).\end{equation}

We put $$A= \frac{c_8T^\beta}{\beta}E_{\beta,\beta}(c_6 T^\beta)\max_{0\leq t\leq T}F(t)
+ U_0E_\beta (c_6 T^\beta).$$

It is clear that $U_0\leq 2A.$ Let $\max_{0\leq t\leq t_1} U_{i-1}\leq 2A$ for some $i\geq 1$. Then
$$\max_{0\leq t\leq t_1} U_i\leq \frac{4c_7t_1^\beta}{\beta}E_{\beta,\beta}(c_6 T^\beta)A^2+A.$$

Choosing 
$$t_1\leq \left(\frac{\beta}{4Ac_7 E_{\beta,\beta}(c_6T^\beta)}\right)^{1/\beta},$$
we get $\max_{0\leq t\leq t_1} U_i\leq 2A.$ 

Thus, ${{U}_{i}}(t)=\|u_i\|^2+\|u_{ixx}\|^2\leq 2A$ inequality holds for all $i$ and $0<t<t_1$.

For further considerations, we suppose $0\leq t\leq  t_1\leq T$. 
\end{proof}

\subsection{Estimates for higher derivatives.}
\begin{lem} \label{high derivatives}
    Let $t_1$ be same as in Lemma \ref{norma2}. Then 
    $$\left\| \frac{\partial^n}{\partial x^n}u_i(\cdot,t)\right\|\leq M(n),$$
    for all $0\leq t\leq t_1$, where $n\geq 3$ and $M(n)$ is a positive constant.
\end{lem}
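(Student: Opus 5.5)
We argue by induction on $n$, starting from the bound $\|u_i\|^2+\|u_{ixx}\|^2\le 2A$ of Lemma \ref{norma2}, which holds uniformly in $i$ on $[0,t_1]$. By interpolation ($\|v'\|^2\le\|v\|\,\|v''\|$), this also bounds $\|u_{ix}\|$. Suppose now that $\|\partial_x^k u_i\|\le M(k)$ for all $k\le n-1$, uniformly in $i$ and $t\in[0,t_1]$. We differentiate \eqref{regulyarizatsiya} $n$ times in $x$, multiply by $2\partial_x^n u_i$ and integrate over $\mathbb R$. As in the proof of Lemma \ref{norma2}, we get:
\begin{itemize}
\item $\partial_{0t}^\beta\|\partial_x^n u_i\|^2$ from the time derivative, by Lemma \ref{LemmAli};
\item $2a^2\|\partial_x^{n+2}u_i\|^2$ from the fourth-order term;
\item zero from the odd-order terms $b\,\partial_x^3$ and $d\,\partial_x$;
\item from the term $c\,\partial_x^2$, the quantity $2|c|\,\|\partial_x^{n+1}u_i\|^2$, which we bound by $\sigma\|\partial_x^{n+2}u_i\|^2+\sigma^{-1}\|\partial_x^n u_i\|^2$.
\end{itemize}
The forcing term is bounded by $\|\partial_x^n f\|^2+\|\partial_x^n u_i\|^2$. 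This requires $\max_{t}\|\partial_x^n f(\cdot,t)\|<\infty$, which holds since $f\in SC(T)$.

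The key step is the nonlinear term. We integrate by parts twice to move two derivatives onto $u_i$:
$$
2|\gamma|\left|\int \partial_x^{n}(u_{i-1}u_{i-1,x})\,\partial_x^n u_i\,dx\right|
=|\gamma|\left|\int \partial_x^{n-2}\big((u_{i-1}^2)_x\big)\,\partial_x^{n+2}u_i\,dx\right|
\le \sigma_2\|\partial_x^{n+2}u_i\|^2+C(\sigma_2)\,\|\partial_x^{n-1}(u_{i-1}^2)\|^2 .
$$
By Leibniz, $\partial_x^{n-1}(u_{i-1}^2)=\sum_{j}\binom{n-1}{j}\partial_x^j u_{i-1}\,\partial_x^{n-1-j}u_{i-1}$. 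In each product we put the factor with fewer derivatives (order at most $(n-1)/2\le n-2$) in $L^\infty$. Its sup norm is controlled through \eqref{QR2} by $L^2$ norms of derivatives of order at most $n-1$, and the other factor is bounded in $L^2$ with order at most $n-1$. By the induction hypothesis, $\|\partial_x^{n-1}(u_{i-1}^2)\|^2\le K(n)$, uniformly in $i$ and $t$. This is why we shift two derivatives rather than one: the right-hand side then involves only derivatives of $u_{i-1}$ of order at most $n-1$, and no derivative of order $n$ of the previous iterate appears.

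Choosing $\sigma,\sigma_2$ with $2a^2-\sigma-\sigma_2>0$ and dropping the remaining positive term, we arrive at
$$\partial_{0t}^\beta\|\partial_x^n u_i\|^2\le c\,\|\partial_x^n u_i\|^2 + K(n)+\max_t\|\partial_x^n f\|^2 .$$
Since $u_i(\cdot,0)=\varphi$, Lemma \ref{GRONWALL} yields
$$\|\partial_x^n u_i\|^2\le \|\varphi^{(n)}\|^2E_\beta(cT^\beta)+\Gamma(\beta)E_{\beta,\beta}(cT^\beta)\,\frac{T^\beta}{\Gamma(\beta+1)}\Big(K(n)+\max_t\|\partial_x^n f\|^2\Big)=:M(n)^2 .$$
This bound is independent of $i$ and of $t\in[0,t_1]$, which completes the induction. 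Unlike in Lemma \ref{norma2}, no further restriction on $t$ is needed, because the nonlinear contribution now enters linearly through an already bounded constant.

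The main obstacle is organizing the Leibniz/interpolation bound on the nonlinear term so that it uses only lower-order norms of $u_{i-1}$ and never the top-order norm of the same iterate; the two integrations by parts are what make this possible. Two smaller points also need checking:
\begin{itemize}
\item Lemma \ref{GRONWALL} requires $\|\partial_x^n u_i\|^2$ to be absolutely continuous in $t$; this follows from $u_i\in AC([0,T];S(\mathbb R))$ in Theorem \ref{linear}.
\item The base case $n=3$ relies on bounds of orders $\le 2$ only, which Lemma \ref{norma2} and the interpolation for $\|u_{ix}\|$ supply.
\end{itemize}
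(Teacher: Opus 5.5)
Your proposal is correct and follows essentially the same route as the paper: induction on $n$ starting from Lemma \ref{norma2}, differentiating \eqref{regulyarizatsiya} $n$ times and testing with $2\partial_x^n u_i$, shifting two derivatives onto $u_i$ so that only $\partial_x^{n-1}(u_{i-1}^2)$ appears, a Leibniz split with the lower-order factor in $L^\infty$, Young absorption into $2a^2\|\partial_x^{n+2}u_i\|^2$, and closing with Lemma \ref{GRONWALL}. Two harmless slips: the bound for the $c\,\partial_x^2$ term should carry a constant depending on $|c|$, and the $ku$ term adds $2|k|\,\|\partial_x^n u_i\|^2$ on the right-hand side.
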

\begin{proof}
    
Let us now carry out an induction on the order of the derivative. For $n\geq 1$ we put 
$$|||v|||_n^2=\int\limits_{-\infty}^{+\infty} \left( \frac{d^n v(x)}{d x^n}\right)^2dx,$$
and $|||v|||_0=\|v\|$.

We suppose $|||u_i|||_j\leq M(j)<+\infty$ for all $0\leq j<n$, $i\in \mathbb N$, where $n\geq 3$.  Now we can take the $n$th derivative of each term of the equation with respect to $x$, then multiply by $2\frac{\partial^n u_i}{\partial x^n}$, and integrate with respect to $x$ in $\mathbb{R}$.

$$2\int\limits_{-\infty}^{+\infty}\partial_{0t}^\beta \frac{\partial^n u_i}{\partial x^n}\frac{\partial^n u_i}{\partial x^n}dx+2\int\limits_{-\infty}^{+\infty}\frac{\partial^{n} }{\partial x^{n}}Lu_i {\frac{\partial^n u_i}{\partial x^n }}dx$$

\begin{equation}\label{LEYBNITS}
=-2{\int\limits_{-\infty}^{+\infty} {\frac{\partial^n}{\partial x^n}}
{\left(u_{i-1}\frac{\partial {u_{i-1}}}{\partial x}\right)}\frac{\partial^n u_i}{\partial x^n}dx 
+2\int\limits_{-\infty}^{+\infty}\frac{\partial^n f}{\partial x^n}\frac{\partial^n u_i}{\partial x^n}dx}.\end{equation}

Now we estimate the terms on the left-hand side of this equality.

\begin{equation}
\int\limits_{-\infty}^{+\infty}\frac{\partial^n u_i}{\partial x^n}\partial_{0t}^\beta \frac{\partial^n u_i}{\partial x^n}dx\geq\partial_{0t}^\beta
\int\limits_{-\infty}^{+\infty}\left(\frac{\partial^n u_i}{\partial x^n}\right)^2 dx=\partial_{0t}^\beta|||u_i|||_n^2,\end{equation}

\begin{equation}
2a^2\int\limits_{-\infty}^{+\infty}\frac{\partial^{n+4} u_i}{\partial x^{n+4}}\frac{\partial^n u_i}{\partial x^n}dx=2a^2\int\limits_{-\infty}^{+\infty}\left(\frac{\partial^{n+2} u_i}{\partial x^{n+2}}\right)^2 dx=2a^2|||u_i|||^2_{n+2},\end{equation}

\begin{equation}
\int\limits_{-\infty}^{+\infty}\frac{\partial^{n+3} u_i}{\partial x^{n+3}}\frac{\partial^n u_i}{\partial x^n}dx=0, \ \ \int\limits_{-\infty}^{+\infty}\frac{\partial^{n+1}u_i}{\partial x^{n+1}}\frac{\partial^n u_i}{\partial x^n}dx=0,\end{equation}

\begin{equation}
\left|\int\limits_{-\infty}^{+\infty}\frac{\partial^{n+2}u_i}{\partial x^{n+2}}\frac{\partial^n u_i}{\partial x^n}dx\right|=|||u_i|||_{n+1}^2\leq \sigma |||u_i|||_{n+2}^2+ \frac{1}{4\sigma}|||u_i|||_n^2.\end{equation}

Now let us estimate the integral arising from the nonlinear term.
\begin{equation}\label{4.24}
\int\limits_{-\infty}^{+\infty}\frac{\partial^n}{\partial x^n} \left(u_{i-1} \frac{\partial u_{i-1}}{\partial x}\right)\frac{\partial^n u_i}{\partial x^n}dx
=\int\limits_{-\infty}^{+\infty}\frac{\partial^n u_i}{\partial x^n}\frac{\partial^{n+1} }{\partial x^{n+1}}(u^2_{i-1})dx
=\int\limits_{-\infty}^{+\infty}\frac{\partial^{n+2}u_i}{\partial x^{n+2}} \frac{\partial^{n-1}} {\partial x ^{n-1}}{(u^2_{i-1})}dx.\end{equation}

If we apply Leibniz's formula to the last expression, we get the following
\begin{equation}
\frac{\partial^{n-1}}{\partial x^{n-1}}(u^2_{i-1})=2\frac{\partial^{n-2}}{\partial x^{n-2}}\left(u_{i-1}u_{i-1,x}\right)=2\sum\limits_{k=0}^{n-2}C^k _n\frac{\partial^k u_{i-1} }{\partial x^k}\frac{\partial^{n-k-1}}{\partial x^{n-k-1}}u_{i-1},\end{equation}


Let us define

\begin{equation}
I_{k,j}=\int\limits_{-\infty}^{+\infty} \frac{\partial^{n+2}u_i}{\partial x^{n+2}}\frac{\partial^k u_{i-1}}{\partial x^k}\frac{\partial^j u_{i-1}}{\partial x^j} dx,
\end{equation}

 where $k+j=n-1$. Without loss of the generality we assume $j\leq k$. So,  $j\leq\frac{n-1}{2}\leq n-2.$ Then 
 
 \begin{equation}
|I_{kj}|\leq \gamma\sup_{x\in\mathbb{R}}\left|\frac{\partial^j u_{i-1}}{\partial x^j}\right| \int\limits_{-\infty}^{+\infty}\left|\frac{\partial^{n+2}u_i}{\partial x^{n+2}} \right|\left|\frac{\partial^k u_{i-1}}{\partial x^k}\right|dx.  \end{equation}

Now we estimate the supremum. Taking into account $j\leq n-2$, and so $|||u_i|||_{j} \leq M(j), |||u_i|||_{j+1} < M(j+1)$, we have

\begin{equation} 
\sup_{x\in \mathbb R}\left|\frac{\partial^j u_{i-1}}{\partial x^j}\right|\leq\sqrt{2}\left(\int\limits_{-\infty}^{+\infty}\left(\frac{\partial^j u_{i-1}}{\partial x^j}\right)^2dx\int\limits_{-\infty}^{+\infty}\left(\frac{\partial^{j+1}u_{i-1}}{\partial x^{j+1}}\right)^2 dx\right)^{\frac{1}{4}}\leq B,\end{equation}
where B is constant independent of $t$ and $i$.
Thus,
$\left|I_{k,j}\right|\leq B |||u_i|||_{n+2} |||u_{i-1}|||_{k},$
and applying Young's inequality,

\begin{equation}|I_{k,j}|\leq\frac{ B}{4\sigma}\int\limits_{-\infty}^{+\infty}\left(\frac{\partial^{k} u_{i-1}}{\partial x^{k}}\right)^2 dx+\sigma \int\limits_{-\infty}^{+\infty} \left(\frac{\partial^{n+2}u_i}{\partial x^{n+2}}\right)^2 dx\leq C(\sigma)+\sigma|||u_i|||_{n+2}^2,\end{equation}
where $\sigma>0$ and its appropriate value will be chosen later.

The last integral on the right-hand side of \eqref{LEYBNITS} is estimated using the Cauchy inequality:

\begin{equation}
\left|2\int\limits_{-\infty}^{+\infty}\frac{\partial^n f}{\partial x^n}\frac{\partial^n u_i}{\partial x^n}dx\right|
\leq \int\limits_{-\infty}^{+\infty}\left(\frac{\partial^n f}{\partial x^n}\right)^2 dx+\int\limits_{-\infty}^{+\infty} \left(\frac{\partial^n u_i}{\partial x^n}\right)^2 dx\end{equation}

Summarizing the above estimation, using appropriate choosing of $\sigma$, we have
\begin{equation}
\partial_{0t}^\beta |||u_i|||_n^2 \leq B|||u_{i}|||_n^2+|||f|||_n^2 +B_2.\end{equation}

Using  Gronwall's inequality (see Lemma \ref{GRONWALL}) we rich the desired estimate $ |||u_i|||_n^2\leq\widetilde{C},  t\in [0,t_1].$

\end{proof}

\subsection{Estimate for $\int\limits_{-\infty}^{+\infty} x^{2m} u^2_i dx$.}

\begin{lem} \label{estimate x_mu}
    Let $t_1$ be same as in Lemma \ref{norma2}. Then 
    $$\int\limits_{-\infty}^{+\infty} x^{2m} u^2_i dx\leq M_1(m),$$
    for all $0\leq t\leq t_1$, where $m\geq 1$ and $M_1(m)$ is a positive constant.
\end{lem}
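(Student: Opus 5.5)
We prove the weighted bound by induction on $m$, using the same energy method as in Lemmas \ref{norma2} and \ref{high derivatives}, now with the weight $x^{2m}$ as the multiplier. Set $W_{m,i}(t)=\int_{-\infty}^{+\infty}x^{2m}u_i^2\,dx$. We multiply \eqref{regulyarizatsiya} by $2x^{2m}u_i$ and integrate over $\mathbb R$. By Lemma \ref{LemmAli}, applied pointwise in $x$ and then integrated, the time term is bounded below by $\partial_{0t}^\beta W_{m,i}$.

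\textbf{The linear terms.} We integrate by parts, moving derivatives onto $x^{2m}u_i$. For the fourth-order term,
$$2a^2\int x^{2m}u_iu_{ixxxx}\,dx=2a^2\int x^{2m}u_{ixx}^2\,dx+R,$$
where $R$ is a sum of integrals of the form $\int x^{2m-l}\,\partial_x^pu_i\,\partial_x^qu_i\,dx$ with $l\ge1$ and $p+q\le 4-l$. The same structure appears for the $b$, $c$, $d$ terms: after integration by parts each gives integrals $\int x^{2m-l}\partial_x^pu_i\partial_x^qu_i\,dx$ with $p,q\le 2$ and total weight at most $2m$. The first term is nonnegative and may be dropped, or kept to absorb pieces.

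We bound every remaining integral by Cauchy--Schwarz, reducing to quantities $\int x^{2r}(\partial_x^ku_i)^2dx$ with $r\le m$ and $k\le 3$. These are estimated by inequality \eqref{Asosiy lemma}, with $N$ large enough. This is admissible because Lemmas \ref{norma2} and \ref{high derivatives} give $\|\partial_x^Nu_i\|^2\le M$ uniformly in $i$ and $t\le t_1$. Where the weight power $2r<2m$, we can alternatively use interpolation against $W_{m,i}$ and the induction hypothesis. In every case each such term is bounded by $M_\varepsilon+\varepsilon W_{m,i}$, or simply by $C(1+W_{m,i})$. The forcing term is bounded by $\int x^{2m}f^2dx+W_{m,i}$, which is finite since $f\in SC(T)$.

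\textbf{The nonlinear term.} Write $2\gamma\int x^{2m}u_iu_{i-1}u_{i-1,x}dx$. We take $\sup|u_{i-1,x}|\le B$, which holds uniformly by the Sobolev-type bound \eqref{QR2} together with Lemma \ref{high derivatives}. Cauchy--Schwarz then bounds the term by
$$|\gamma|B\,(W_{m,i}+W_{m,i-1}).$$
This is the main obstacle: the bound couples the iterate $i$ to the iterate $i-1$. The estimate is now linear in $W_{m,i-1}$, however, not quadratic, so it closes for all $t\le t_1$ without shrinking $t_1$ further.

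\textbf{Closing the estimate.} Collecting the bounds, we obtain
$$\partial_{0t}^\beta W_{m,i}\le c\,W_{m,i}+c\,W_{m,i-1}+C,$$
with $W_{m,i}(0)=\int x^{2m}\varphi^2dx$. Lemma \ref{GRONWALL} then gives
$$W_{m,i}(t)\le a+b\,I_{0t}^\beta W_{m,i-1},$$
where $a$ and $b$ depend on $T$ and $E_{\beta}(cT^\beta)$, $E_{\beta,\beta}(cT^\beta)$.

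Lemma \ref{yangi lemma1} converts this recursion into
$$W_{m,i}\le a\,E_\beta(bt^\beta)+b^iI_{0t}^{i\beta}W_{m,0}.$$
Since $W_{m,0}=\int x^{2m}\varphi^2dx$ is a constant,
$$b^iI_{0t}^{i\beta}W_{m,0}=W_{m,0}\,\frac{b^it^{i\beta}}{\Gamma(i\beta+1)},$$
and these terms are uniformly bounded in $i$. This gives the constant $M_1(m)$.

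A secondary technical point is justifying the pointwise application of Lemma \ref{LemmAli} and the integration by parts. Both are fine because $u_i\in AC([0,T];S(\mathbb R))$ by Theorem \ref{linear}.
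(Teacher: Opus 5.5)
Your proposal follows essentially the same route as the paper: multiply by $2x^{2m}u_i$, use Alikhanov's inequality for the time term, and bound the nonlinear term through $\sup|u_{i-1,x}|$ to get the linear recursion $\partial_{0t}^\beta W_{m,i}\le cW_{m,i}+cW_{m,i-1}+C$, then close with Lemma \ref{GRONWALL} and Lemma \ref{yangi lemma1}; your induction on $m$ and your forcing bound $\int x^{2m}f^2+W_{m,i}$ (instead of $\int x^{4m}f^2+\|u_i\|^2$) are inessential variations. One caution: the dissipative term $2a^2\int x^{2m}u_{ixx}^2\,dx$ must be kept, not dropped, because $2c\int x^{2m}u_iu_{ixx}\,dx=-2c\int x^{2m}u_{ix}^2\,dx+\dots$ has the wrong sign when $c>0$, and \eqref{Asosiy lemma} at weight $x^{2m}$ costs $\int x^{2m+2}u_i^2\,dx$ rather than $\varepsilon W_{m,i}$, so this term must be absorbed via $\int x^{2m}v_x^2\,dx\le\sigma\int x^{2m}v_{xx}^2\,dx+C(\sigma)\int(x^{2m}+1)v^2\,dx$, exactly as the paper does.
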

\begin{proof}

We do this in similar way as in the case of previous estimates. Without loss of generality, we suppose $m\geq 2$. So, we multiply each term of the equation by $2 x^{2m} u_i $ and integrate with respect to $x$ in $\mathbb R$.
\begin{equation}\label{ESTx^2m}
2\int\limits_{-\infty}^{+\infty} x^{2m} u_i \partial^\beta _{0t} u_i dx+2\int\limits_{-\infty}^{+\infty} x^{2m} u_i Lu_{i}dx
=-2\gamma\int\limits_{-\infty}^{+\infty} x^{2m} u_i u_{i-1} u_{i-1,x} dx+2\int\limits_{-\infty}^{+\infty} x^{2m} u_i f(x,t)dx.
\end{equation}

First, we estimate the first integral on the right-hand side of the equality \eqref{ESTx^2m}.

According to the estimates obtained above, we have 
\begin{equation}\label{modul ux}
\sup_{x\in\mathbb R}\left|\frac{\partial u_{i-1}}{\partial x}\right|\leq\sqrt2\left(\int\limits_{-\infty}^{+\infty}\left(\frac{\partial u_{i-1}}{\partial x}\right)^2 dx \int\limits_{-\infty}^{+\infty}\left(\frac{\partial^2 u_{i-1}}{\partial x^2}\right)^2 dx\right)^\frac{1}{2}<C,
\end{equation}

and then

$$\bigg|-2\gamma\int\limits_{-\infty}^{+\infty} u_i u_{i-1} \frac{\partial u_{i-1}}{\partial x} x^{2m} dx\bigg|\leq \sup_{x\in \mathbb R}\left|\frac{\partial u_{i-1}}{\partial x}\right| \int\limits_{-\infty}^{+\infty}|u_i||u_{i-1}|x^{2m} dx$$

\begin{equation}\leq C \int\limits_{-\infty}^{+\infty}|u_i||u_{i-1}|x^{2m} dx\leq \frac{C}{2} \int\limits_{-\infty}^{+\infty} u^2_i x^{2m} dx+\frac{C}{2} \int\limits_{-\infty}^{+\infty} u^2_{i-1} x^{2m} dx.\end{equation}

For the second term on the right-hand side of the equality \eqref{ESTx^2m}, we have
\begin{equation}
\bigg|2\int\limits_{-\infty}^{+\infty} f(x,t) x^{2m} u_i dx \bigg|\leq \int\limits_{-\infty}^{+\infty} x^{4m} f^2(x,t) dx+\int\limits_{-\infty}^{+\infty} u_i^{2} dx\leq C_1,\end{equation}

Now we estimate integrals on the left-hand side of the equality \eqref{ESTx^2m}. According to the Lemma \ref{Lemm} we have

\begin{equation}
2\int\limits_{-\infty}^{+\infty} x^{2m} u_i \partial^\beta_{0t} u_i dx\geq \partial^\beta_{0t} \int\limits_{-\infty}^{+\infty} x^{2m} u^2_i dx,\end{equation}

We simplify the other terms on the left-hand side by successively applying integration by parts.
$$2\int\limits_{-\infty}^{+\infty} x^{2m} u_i u_{ixxxx}dx=4 m(2m-1)(2m-2)(2m-3)\int\limits_{-\infty}^{+\infty} x^{2m-4} u^2_i dx $$

\begin{equation}
-7 m (2m-1)\int\limits_{-\infty}^{+\infty} x^{2m-2}u^2_{i,x}dx+2\int\limits_{-\infty}^{+\infty} x^{2m} u^2_{ixx}dx.\end{equation}

Similarly, 
\begin{equation}
\int\limits_{-\infty}^{+\infty} x^{2m} u_i u_{ixxx} dx=M_1\int\limits_{-\infty}^{+\infty} x^{2m-3} u^2_i dx
+M_2\int\limits_{-\infty}^{+\infty} x^{2m-1} u^2_{i,x} dx,\end{equation}

\begin{equation}
\int\limits_{-\infty}^{+\infty} x^{2m} u_i u_{ixx} dx=-\int\limits_{-\infty}^{+\infty} x^{2m} u^2_{ix} dx
+M_3\int\limits_{-\infty}^{+\infty} x^{2m-2} u^2_{i} dx,\end{equation}

\begin{equation}
2\int\limits_{-\infty}^{+\infty} x^{2m} u_i u_{ix} dx=-2m\int\limits_{-\infty}^{+\infty} x^{2m-1} u^2_i dx,\end{equation}

Now we need some inequalities for further estimations. For $v\in S(\mathbb R)$ and $k\in \mathbb N$ we have 

\begin{equation}
\int\limits_{-\infty}^{+\infty} x^{k} v^2 dx\leq \int\limits_{|x|\geq 1} x^{k} v^2 dx+\int\limits_{-1}^{1} v^2 dx\leq \int\limits_{-\infty}^{+\infty} |x|^{k+1}v^2 dx+ \int\limits_{-\infty}^{+\infty}v^2 dx, \end{equation}
and
$$
\int\limits_{-\infty}^{+\infty} x^{k} v^2_x dx= -\int\limits_{-\infty}^{+\infty} x^{k} v v_{xx} dx-k\int\limits_{-\infty}^{+\infty}x^{k-1}v^2 dx$$

$$\leq \sigma\int\limits_{-\infty}^{+\infty} |x|^{k} v_{xx}^2 dx +C(k,\sigma)\int\limits_{-\infty}^{+\infty}(|x|^{k}+1)v^2 dx.$$

So, taking in mind the above estimations and choosing appropriate value of $\sigma$, from \eqref{ESTx^2m} we obtain

\begin{equation}\label{BAHO}
\partial^\beta_{0t}\int\limits_{-\infty}^{+\infty} x^{2m} u^2_i
dx\leq M_5(m)\int\limits_{-\infty}^{+\infty} x^{2m} u^2_i dx+M_6
\int\limits_{-\infty}^{+\infty} x^{2m} u^2_{i-1} dx+M_7(m).\end{equation}
Here $M_6$ does not depend on $m$.

Based on Gronwall-Bellman inequality given in the Lemma \ref{GRONWALL}, from \eqref{BAHO} we obtain 
$$\int\limits_{-\infty}^{+\infty} x^{2m} u^2_i dx\leq E_\beta(M_5T^\beta) \int\limits_{-\infty}^{+\infty} x^{2m}\varphi^2(x)dx $$

$$+
\frac{M_7T^\beta}{\beta} E_{\beta,\beta}(M_5T^\beta) + M_6 E_{\beta,\beta}(M_5T^\beta) I_{0t}^\beta \int\limits_{-\infty}^{+\infty} x^{2m} u^2_{i-1} dx=M_8+M_9I_{0t}^\beta\int\limits_{-\infty}^{+\infty} x^{2m} u^2_{i-1} dx.$$

Now, applying Lemma \ref{yangi lemma1} we get
$$\int\limits_{-\infty}^{+\infty} x^{2m} u^2_i dx\leq M_8 E_\beta(M_9T^\beta)+M_9^iI_{0t}^{i\beta}\int\limits_{-\infty}^{+\infty} x^{2m} \varphi^2(x) dx$$

$$=M_8 E_\beta(M_9T^\beta)+\frac{M_9^i T^{i\beta}}{\Gamma(i\beta+1)}\int\limits_{-\infty}^{+\infty} x^{2m} \varphi^2(x) dx \leq const,$$
as $$\lim_{i\to +\infty} \frac{M_9^i T^{i\beta}}{\Gamma(i\beta+1)}=0.$$
\end{proof}

A-priori estimates with respect to other semi-norms \eqref{yarim norma} can be obtained using equation \eqref{1-tenglama} and inequality from Lemma \ref{Lemm_yakupov}.

\subsection{Convergence of $\{u_i\}$.}

Now, to complete the proof of the Theorem \ref{nonlinear}, we should show that $\{u_i\}$ is a fundamental sequence for each of the above  semi-norms \eqref{yarim norma}. 
 
Denote the difference $\omega_i=u_i-u_{i-1}$, $i\geq 1.$

Then for $i\geq 2$ we have the following equation for $\omega_i$.
\begin{equation}\label{difference}
    \partial _{0t}^{\beta }{{\omega }_{i}}+L\omega_{i}
=-{{\omega }_{i-1}}{{u}_{i-1,x}}-{{u}_{i-2}}{{\omega }_{i-1,x}}, \ \ w(x,0)=0.
\end{equation}

Now we multiply the  equation \eqref{difference} by $\omega_i$ and integrate over $\mathbb{R}$.

$$\int\limits_{-\infty}^{+\infty}{\partial _{0t}^{\beta }{{\omega }_{i}}{{\omega }_{i}}dx
+\int\limits_{-\infty}^{+\infty}L\omega_{i}}{{\omega }_{i}}dx
=-\int\limits_{-\infty}^{+\infty}{{{\omega }_{i-1}}\frac{\partial {{u}_{i-1}}}{\partial x}{{\omega }_{i}}dx}
-\int\limits_{-\infty}^{+\infty}{{u}_{i-2}}\left( \frac{\partial {{\omega }_{i-1}}}{\partial x} \right){{\omega }_{i}}dx=I_1+I_2.$$

We estimate the right-hand side. To do this, we use integration by parts and apply the Cauchy–Bunyakovsky–Schwarz inequality.

In the first term, using the estimate \eqref{modul ux}, we get

\begin{equation}|{{I}_{1}}|=\left| \int\limits_{-\infty}^{+\infty}{{{\omega }_{i-1}}{{u}_{i-1,x}}{{\omega }_{i}}dx} \right|\le \underset{x\in \mathbb R}{\mathop{\sup }}\,\left| {{u}_{i-1,x}} \right|\int\limits_{-\infty}^{+\infty}{\left| {{\omega }_{i-1}} \right|\cdot \left| {{\omega }_{i}} \right|dx\le }
 M\left[ {{\left\| {{\omega }_{i}} \right\|}^{2}}+\left\| \omega _{i-1} \right\|^2 \right].\end{equation} 

Now we split the second integral into two parts

$$|{{I}_{2}}|=\int\limits_{-\infty}^{+\infty}{{{u}_{i-2}}{{\omega }_{i-1,x}}{{\omega }_{i}}dx=}\int\limits_{-\infty}^{+\infty}{{u}_{i-2}}{{\omega }_{ix}}{{\omega }_{i-1}}dx+\int\limits_{-\infty}^{+\infty}{{{u}_{i-2,x}}{{\omega }_{i}}{{\omega }_{i-1}}dx}=I_{2,1}+I_{2,2}.$$

Using the boundedness of $u_{i-2}$ and $u_{i-2,x}$, we can estimate these integrals as follows:

$$|I_{2,1}|\leq const \int\limits_{-\infty}^{+\infty} |\omega_{ix}||\omega_{i-1}|dx \leq \varepsilon\int\limits_{-\infty}^{+\infty} |\omega_{ix}|^2 dx+C_1(\varepsilon)\int\limits_{-\infty}^{+\infty}|\omega_{i-1}^2|dx$$
$$\leq \varepsilon\int\limits_{-\infty}^{+\infty} \omega_{ixx}^2 dx+C_1(\varepsilon)\int\limits_{-\infty}^{+\infty}\omega_{i-1}^2dx+\varepsilon\int\limits_{-\infty}^{+\infty}\omega_{i}^2dx,$$

$$|I_{2,2}|\leq const \left(\int\limits_{-\infty}^{+\infty} (|\omega_{i}|^2 +|\omega_{i-1}^2|)dx\right).$$

Choosing appropriate $\varepsilon$, in the same manner as in above obtained estimates, we get 

$$\partial _{0t}^{\beta }{{\left\| {{\omega }_{i}} \right\|}^{2}}\le \widetilde{C}\left[ {{\left\| {{\omega }_{i}} \right\|}^{2}}+{{\left\| {{\omega }_{i-1}} \right\|}^{2}} \right].$$

From the last inequality, using the Gronwall's inequality from Lemma \ref{GRONWALL}, we get

\begin{equation}
{{\left\| {{\omega }_{i}}(x,t) \right\|}^{2}} \leq {{\left\| {{\omega }_{i}}(x,0) \right\|}^{2}}E_\beta(\widetilde{C}t^\beta)+\widetilde{C} \cdot\Gamma(\beta)E_{\beta,\beta}(\widetilde{C}T^\beta)I^\beta _{0t}{{\left\| {{\omega }_{i-1}}(x,t) \right\|}^{2}} = KI^\beta _{0t}{{\left\| {{\omega }_{i-1}}(x,t) \right\|}^{2}}.
\end{equation}
Here used the fact that $\omega_i(x,0)=u_i(x,0)-u_{i-1} (x,0)=0$.

Therefore, based on \ref{yangi lemma1}, the following inequality is obtained:

\begin{equation}
{{\left\| {{\omega }_{i}}(x,t) \right\|}^{2}} \leq K^{i-1}I_{0t}^{(i-1)\beta}{{\left\| {{\omega }_{1}}(x,t) \right\|}^{2}}.\end{equation}

From the above estimates , since
$\|\omega _1 (\cdot,t)\|=\|u _1 (\cdot,t)-u_0(\cdot,t)\|\leq const$
we have
\begin{equation}
{{\left\| {{\omega }_{i}}(x,t) \right\|}^{2}} \leq K^{i-1}C\frac{T^{(i-1)\beta}}{\Gamma\left((i-1)\beta+1\right)}.\end{equation}

Now we prove that the sequence $\{u_i\}$ is a Cauchy (fundamental) sequence with respect to norm $\|\cdot\|$.
We have

$$\|u_{i+p} (x,t)-u_i (x,t)\|=\left\|\sum_{k={i+1}}^{i+p}\omega_k (x,t)\right\|\leq\sum_{k={i+1}}^{i+p}\|\omega_k (x,t)\|\leq C\sum_{k=i+1}^{i+p}\frac{(KT^\beta)^\frac{{k-1}}{2}}{\sqrt {\Gamma((k-1)\beta+1)}}.$$

It is well known that the series 
$\sum_{k=1}^{\infty}\frac{(KT^\beta)^\frac{{k-1}}{2}}{\sqrt {\Gamma((k-1)\beta+1)}}$
is a convergent series. Therefore, by the Cauchy criterion,

$$\forall\varepsilon>0,\exists N: \forall i\geq N, \ \ \sum_{i+1}^{\infty}\|\omega_k(x,t)\|<\varepsilon.$$
Consequently,
$$\left\|u_{i+p} (x,t)-u_i (x,t)\right\|<\varepsilon.$$
This means that the sequence $u_i (x,t)$ is a Cauchy (fundamental) sequence in the norm of $L^2(\mathbb R)$. 

A proof of convergence (fundamentality property)  in the other semi-norms appearing in \eqref{yarim norma} can be carried out exactly in the same manner. Since there are no essential differences in the derivation process, we do not present them here.

Based on the above results and bearing in mind the completeness of the $L^2(\mathbb R)$ space, we conclude that the sequence $\{u_i\}$ converges in the topology defined by the semi-norms in \eqref{yarim norma} to some function $u\in SC_\beta(0,t_1),$ in the interval $0\leq t\leq t_1.$

\section{THE GLOBAL SOLVABILITY} 

In this section, we discuss the problem of the continuation of the solution to the interval $[0,T]$ for arbitrary $T>0$. Here, we supposing that $t_1<T$. 

First, we want to do a formal analysis. So, we consider the continued problem
$$\partial_{t_1t}^\beta u+Lu=-\gamma u u_x +f_1(x,t), \  t_1<t\leq t_1+t_2,\ x\in \mathbb R,$$
$$u|_{t=t_1}=\varphi_1(x), \ \  x\in \mathbb R,$$
where $f_1=f-\partial_{0t_1}^\beta u$ and the initial data taken from the old solution $\varphi_1=u|_{t=t_1}$. 

According to the result of Section 4 the continued problem is solvable for 
$$t_2\leq \left(\frac{\beta}{4A_1c_7 E_{\beta,\beta}(c_6T^\beta)}\right)^{1/\beta},$$
where
 $$A_1= \frac{c_8T^\beta}{\beta}E_{\beta,\beta}(c_6 T^\beta)\|f_1\|_2^2
+ E_\beta (c_6 T^\beta) \|\varphi_1\|_2^2.$$

From here we see that, if we could get a priori estimate $A_1<\tilde A$ with $\tilde A\geq A$ dependent on given data and independent on $t_1$ and the norms of the solution, then 

$$\left(\frac{\beta}{4\tilde Ac_7 E_{\beta,\beta}(c_6T^\beta)}\right)^{1/\beta}\leq min\left\{ \left(\frac{\beta}{4A_1c_7 E_{\beta,\beta}(c_6T^\beta)}\right)^{1/\beta}, \left(\frac{\beta}{4Ac_7 E_{\beta,\beta}(c_6T^\beta)}\right)^{1/\beta} \right\}.$$
So, we could put
$$ t_1=t_2=\left(\frac{\beta}{4\tilde Ac_7 E_{\beta,\beta}(c_6T^\beta)}\right)^{1/\beta}.$$

Thus, we could continue with equal steps and can cover the desired interval $[0,T]$ in finite number of steps.

So, we need to estimate $A_1$. It is easy to see that for this purpose it is enough to estimate $$\max_{0\leq t\leq T}\left(|||u|||_6^2+\|u\|^2\right).$$ 

As the estimation processes mostly repeat the same steps as in the case of similar estimates from the Section 4, we show only important parts in it.

Multiply both sides of the equation \eqref{1-tenglama} by $2u$ and taking into account the following equality
$$\int\limits_{-\infty}^{+\infty}u^2u_xdx=0,$$
we have 
$$\partial_{0t}^\beta\|u\|^2\leq \|u\|_{L^2}^2+ \|f\|_{L^2}^2.$$
So, using the Gronwall-Bellman's inequality, we get $\|u\|\leq P_0=const,\forall t\in[0,T].$

Next, take the first $x$-derivatives and multiply both sides of the equation \eqref{1-tenglama} by $2u_x$. Keeping in the mind
$$\left|-\int\limits_{-\infty}^{+\infty}u_x (uu_x)_x dx\right|=\left|\int\limits_{-\infty}^{+\infty}u_{xx} u u_x dx\right|\leq \sup_{x \in \mathbb R} |u_x|\cdot\|u\|\cdot\|u_{xx}\|$$
$$\leq P_0 \|u_x\|^{1/2}\|u_{xx}\|^{3/2} \leq P_0 \|u_x\|^{5/4}\|u_{xxx}\|^{3/4}\leq \sigma \|u_{xxx}\|^2 +C(\sigma)\|u_x\|^2,$$
in the similar way (by choosing appropriate value of $\sigma$) we have
$$\partial_{0t}^\beta\|u_x\|^2\leq C\|u_x\|^2+ \|f_x\|^2.$$
And so, $\|u_x\|\leq P_1=const.$

Now, take the second order $x$-derivative and multiply both sides of the equation \eqref{1-tenglama} by $2u_{xx}$.
In this case integral, that is coming from nonlinear term, can be estimated like below
$$\left|-\int\limits_{-\infty}^{+\infty}u_{xx} (uu_x)_{xx} dx\right|=\left|-\int\limits_{-\infty}^{+\infty}u_{xxxx} u_{x} u dx\right|$$
$$\leq \sup_{x \in \mathbb R}|u| \cdot\|u_x\|\cdot\|u_{xxxx}\|\leq \sqrt{2}P_0^{1/2}P_1^{3/2}\|u_{xxxx}\|\leq C(\sigma_1)+\sigma_1\|u_{xxxx}\|^2.$$
Again, using almost the same steps as above, we get $\|u_{xx}\|\leq P_2$. 

From the results of the Section 4 we see that higher derivatives does not use directly smallness of $t$ (i.e. $t<t_1$). So, we can conclude, that the proof of estimates $\|u\|_n < P_n, \ n\geq 2,$ just repeat the same steps as in similar estimates from previous chapter. So, the estimates $\|u\|_n < P_n=const$ are hold for $n\geq 2$.

We got uniform estimate for $\tilde A$ with respect to the time-variable $t$. In the next steps of continuation with respect to $t$, for example, for $t_1+t_2<t<t_1+t_2+t_3$ the corresponding $A_3$ also has a lower bound $\tilde A$. So, we could again take $t_3=t_1$.

In this way, we prove global solvability.

\begin{thm} \label{Global} (Global sovability).
    Let $T$ be an arbitrary positive number, ${u_0 (x)}\in S(\mathbb R)$ and ${f(t,x)}\in SC(T)$. Then
the Cauchy problem \eqref{1-tenglama}, \eqref{1-tenhglama IC} has a solution in space $SC_\beta(T)$.
\end{thm}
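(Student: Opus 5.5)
The proof is a continuation argument. Theorem \ref{nonlinear} gives a solution on $[0,t_1]$, and the step length depends only on the size $\|\varphi\|_2^2$ of the data and on $\max\|f\|_2^2$. So it suffices to show two things. First, any solution on an interval $[0,\tau]\subset[0,T]$ satisfies a priori bounds in $\|\cdot\|_2$ (in fact in all seminorms $|||\cdot|||_n$) that are independent of $\tau$. Second, the problem restarted at $\tau$ again falls under Theorem \ref{nonlinear}, with a constant $\tilde A$ controlled by these bounds. Then all steps have a common length $t_*=\left(\beta/(4\tilde A c_7E_{\beta,\beta}(c_6T^\beta))\right)^{1/\beta}$, and $[0,T]$ is covered in $\lceil T/t_*\rceil$ steps.

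\textbf{A priori bounds.} I would first prove the bounds for a smooth solution $u\in SC_\beta(\tau)$, using the energy method of Lemma \ref{LemmAli} together with Lemma \ref{GRONWALL}.

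Multiplying \eqref{1-tenglama} by $2u$ kills the nonlinearity, since $\int u^2u_x\,dx=0$. The linear part is handled as in Lemma \ref{norma2}, giving $\partial_{0t}^\beta\|u\|^2\le C\|u\|^2+\|f\|^2$, hence $\|u\|\le P_0$ on $[0,\tau]$.

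Next, differentiate once, multiply by $2u_x$, and bound the nonlinear term by $\sup|u_x|\,\|u\|\,\|u_{xx}\|$. Gagliardo--Nirenberg interpolation reduces this to $\sigma\|u_{xxx}\|^2+C(\sigma)\|u_x\|^2$, and the term $\sigma\|u_{xxx}\|^2$ is absorbed by the dissipative term $2a^2\|u_{xxx}\|^2$. This gives $\|u_x\|\le P_1$.

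Differentiate twice, multiply by $2u_{xx}$, and integrate by parts so that the nonlinear term is $\int u_{xxxx}uu_x\,dx\le \sqrt2P_0^{1/2}P_1^{3/2}\|u_{xxxx}\|$. Absorbing again gives $\|u_{xx}\|\le P_2$.

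For $n\ge3$ the induction of Lemma \ref{high derivatives} never used the smallness of $t$. It only needed bounds on lower derivatives to control the $\sup$ factors, so it yields $|||u|||_n\le P_n$ for all $n$, uniformly on $[0,\tau]$. The weighted bounds follow as in Lemma \ref{estimate x_mu}, now with $u_{i-1}=u_i=u$, which gives a linear Gronwall inequality. None of these constants depends on $\tau$.

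\textbf{Restarting.} The main obstacle is the nonlocality of the Caputo derivative. The problem on $[\tau,\tau+t_*]$ is not autonomous: $\partial_{0t}^\beta u$ contains the memory term $\frac{1}{\Gamma(1-\beta)}\int_0^\tau (t-\xi)^{-\beta}u'(\xi)\,d\xi$. My plan is to move this memory term into the right-hand side, writing the equation as $\partial_{\tau t}^\beta u+Lu+\gamma uu_x=f_1$ with $f_1=f-(\text{memory term})$, and initial datum $\varphi_1=u(\cdot,\tau)$.

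This requires bounding $\|f_1\|_2$ independently of $\tau$, which in turn needs a bound on $u_t$ (not just on $\partial_{0t}^\beta u$) on $[0,\tau]$. The absolute continuity from Theorem \ref{linear} provides $u_t$, and I would try to bound $\|u_t\|_2$ through the Mittag-Leffler representation \eqref{yechim} with $g=f-\gamma uu_x$, using the bound \eqref{M-L-bahosi}. The difficulty is the integrable singularity $t^{\beta-1}$ of $u_t$ at $t=0$. I expect it can be handled because $(t-\xi)^{-\beta}$ is integrable against it, but this step needs care.

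The local theory of Section 4 is insensitive to where the time origin is placed: its proof goes through verbatim with $\partial_{\tau t}^\beta$ and the forcing $f_1\in SC$. So the local existence time depends only on $A_1\le\tilde A$, where $\tilde A$ is built from $P_0,P_2$, $\max\|f\|_2$, and the memory bound.

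\textbf{Gluing.} Finally, glue the pieces. Because the solutions agree at each junction and the memory term has been accounted for, the concatenated function satisfies \eqref{1-tenglama} with the full operator $\partial_{0t}^\beta$ on $[0,T]$. The seminorm bounds, which are uniform in $t$, together with the continuity in $t$ of each piece, place the result in $SC_\beta(T)$.
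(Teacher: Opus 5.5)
Your continuation scheme and your a priori estimates coincide with the paper's: the cancellation $\int u^2u_x\,dx=0$, the interpolation that makes the inequality for $\|u_x\|^2$ linear, the bound for $\|u_{xx}\|$, and then the induction of Lemma \ref{high derivatives}. There is, however, a genuine gap in the restart step. To apply Theorem \ref{nonlinear} from the origin $\tau$, you need $f_1=f-h$ to lie in $SC$ on $[\tau,\tau+t_*]$, with $\max_t\|f_1\|_2$ bounded independently of $\tau$. Here
$h(t)=\frac{1}{\Gamma(1-\beta)}\int_0^\tau (t-\xi)^{-\beta}u_\xi(\cdot,\xi)\,d\xi$.
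Your plan obtains this from a bound like $\|u_t(\cdot,\xi)\|_2\le C\xi^{\beta-1}$, and that bound is not available under the hypotheses, because $f\in SC(T)$ is only continuous in $t$. Differentiating the Duhamel term of \eqref{yechim} in $t$ does one of two things:
\begin{itemize}
\item it produces the kernel $(t-s)^{\beta-2}E_{\beta,\beta-1}(-P(\lambda)(t-s)^\beta)$, which is not integrable at $s=t$;
\item or, after moving the derivative onto $\hat g$, it requires $\partial_t f$ and $\partial_t(uu_x)$.
\end{itemize}
Absolute continuity in $t$ (as claimed in Theorem \ref{linear}) gives $u_t\in L^1$, but not the rate you need.

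So the real obstacle is not the singularity at $\xi=0$; it is the behaviour as $\xi\to\tau^-$. Rewriting
$h(t)=\frac{1}{\Gamma(1-\beta)}\bigl[t^{-\beta}(u(\tau)-\varphi)+\beta\int_0^\tau (t-\xi)^{-\beta-1}(u(\tau)-u(\xi))\,d\xi\bigr]$
shows that boundedness of $h$ as $t\to\tau^+$ depends on the time modulus of continuity of $u$ at $\tau$. Controlling $\sup_t\|\partial_{0t}^\beta u\|_2$ alone gives only $\|u(\tau)-u(\xi)\|_2=O((\tau-\xi)^\beta)$, and hence only $\|h(t)\|_2=O(\log\frac{1}{t-\tau})$. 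Consequently neither $f_1\in SC$ nor $A_1\le\tilde A$ is established, and your gluing step inherits the same problem. For comparison, the paper never involves $u_t$. In what it calls a formal analysis, it asserts that bounding $\max_{0\le t\le T}(|||u|||_6^2+\|u\|^2)$ suffices to control $A_1$, i.e.\ it reads $f_1$ off the equation using spatial norms. By the computation above, this does not control the $t$-dependent memory integral either, so the paper gives no rigorous version of this step for you to borrow.

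The cleanest repair avoids splitting the Caputo derivative altogether. Rerun the successive approximations of Section 4 on the whole interval $[0,\tau+\delta]$. Take as zeroth iterate the solution already constructed on $[0,\tau]$, frozen at $u(\cdot,\tau)$ for $t\ge\tau$. Because \eqref{yechim} is causal and Theorem \ref{linear} gives uniqueness, every iterate coincides with $u$ on $[0,\tau]$. In the Gronwall step of Lemma \ref{norma2}, $I_{0t}^\beta U_{i-1}^2$ then splits into two parts:
\begin{itemize}
\item a history part, at most $T^\beta P^2/\Gamma(\beta+1)$, where $P$ is your $\tau$-independent bound for $\|u\|_2^2$;
\item a fresh part, at most $\delta^\beta\max_{[\tau,\tau+\delta]}U_{i-1}^2/\Gamma(\beta+1)$.
\end{itemize}
The induction $\max U_i\le 2\tilde A$ therefore closes with $\tilde A$ and $\delta$ depending only on $P$, $T$ and the data. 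The differences $\omega_i$ vanish on $[0,\tau]$, so the convergence argument of Section 4 goes through unchanged. The limit solves \eqref{1-tenglama} with the full $\partial_{0t}^\beta$ on $[0,\tau+\delta]$, so no memory term, no bound on $u_t$, and no gluing are needed. Your uniform a priori bounds are exactly what makes $\delta$ independent of $\tau$. Equivalently, you can continue the Volterra form \eqref{yechim} with $\hat g=\hat f-\gamma\mathbf F[uu_x]$; there the integral over $[0,\tau]$ is a known forcing that is continuous in $t$.
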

{\bf Remark.} {\it The estimates obtained above allow us to pass to the limit in equality \eqref{yechim} with $\hat g=\hat f-\gamma \mathbf F[uu_x]$ in the space $C([0,T]; S(\mathbb R))$. Therefore, we can conclude that the solution also belongs to the class $AC([0,T];S(\mathbb R))$.}

\section{Uniqueness of the solution.}

\begin{thm}\label{uniqueness}
    Under the condition of Theorem \ref{Global} the Cauchy problem is uniquely solvable in the  space $SC_\beta(T)\cap AC([0,T];S(\mathbb R))$ for any $T>0$.
\end{thm}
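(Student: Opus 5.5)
Existence is already given by Theorem \ref{Global} together with the Remark following it. So it only remains to prove uniqueness, and I would do this by an energy estimate on the difference of two solutions, closed with Alikhanov's lemmas.

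Suppose $u$ and $v$ are two solutions of \eqref{1-tenglama}, \eqref{1-tenhglama IC} in $SC_\beta(T)\cap AC([0,T];S(\mathbb R))$, and set $w=u-v$. Then $w(x,0)=0$ and
\[
\partial_{0t}^\beta w+Lw=-\gamma\left(uu_x-vv_x\right)=-\gamma\left(w u_x+v w_x\right).
\]
Since both $u$ and $v$ lie in $C([0,T];S(\mathbb R))$, the quantities
\[
\max_{0\le t\le T}\sup_{x\in\mathbb R}\left(|u|+|u_x|+|v|+|v_x|\right)\le K
\]
are finite. This follows from the embedding estimate \eqref{QR2}: the sup norms are controlled by the semi-norms \eqref{yarim norma}, which are continuous in $t$ on the compact interval $[0,T]$. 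I stress that here we use only the a priori regularity of the two given solutions, not any smallness of $t$.

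Next, multiply the equation by $2w$ and integrate over $\mathbb R$. The absolute continuity assumption $w\in AC([0,T];S(\mathbb R))$ is exactly what allows Lemma \ref{LemmAli} to be applied pointwise in $x$, which gives
\[
2\int_{\mathbb R} w\,\partial_{0t}^\beta w\,dx\ge \partial_{0t}^\beta\|w\|^2 .
\]
For the linear part, integration by parts as in Lemma \ref{norma2} gives
\[
2\int_{\mathbb R} wLw\,dx=2a^2\|w_{xx}\|^2+2c\int_{\mathbb R} w w_{xx}\,dx+2k\|w\|^2 ,
\]
because the odd-order terms vanish. The middle term is bounded by $a^2\|w_{xx}\|^2+C\|w\|^2$ using Young's inequality. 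For the nonlinear part:
\begin{itemize}
\item the first piece satisfies $\left|2\gamma\int_{\mathbb R} w^2u_x\,dx\right|\le 2|\gamma|K\|w\|^2$;
\item for the second piece, integrate by parts, $2\int_{\mathbb R} v w w_x\,dx=-\int_{\mathbb R} v_x w^2\,dx$, so it is also bounded by $|\gamma|K\|w\|^2$.
\end{itemize}
Thus the troublesome derivative on $w$ is removed without using any higher norm of $w$.

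Collecting these bounds gives
\[
\partial_{0t}^\beta\|w\|^2\le C_0\|w\|^2 ,\qquad \|w(\cdot,0)\|^2=0 .
\]
The function $y(t)=\|w(\cdot,t)\|^2$ is nonnegative and absolutely continuous, again because $w\in AC([0,T];S(\mathbb R))$. Lemma \ref{GRONWALL} with $c_2\equiv0$ then yields $y(t)\le y(0)E_\beta(C_0t^\beta)=0$. Hence $w\equiv0$ on $[0,T]$. Since $T>0$ is arbitrary, the solution is unique.

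The main obstacle I expect is justifying Alikhanov's inequality under the integral sign, together with the absolute continuity of $t\mapsto\|w\|^2$, so that Lemma \ref{GRONWALL} applies. I would handle this using the $AC([0,T];S(\mathbb R))$ hypothesis: write
\[
\partial_{0t}^\beta\|w\|^2=I_{0t}^{1-\beta}\frac{d}{dt}\|w\|^2
\]
and interchange integration in $x$ and $t$ by Fubini. The rapid decay in $x$, which holds uniformly in $t$, provides the dominated convergence needed for that interchange.
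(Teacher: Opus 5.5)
Your proposal is correct and follows the paper's proof essentially step for step. Like the paper, you take existence from Theorem~\ref{Global} and its Remark, then derive an $L^2$ energy estimate for $w$ by multiplying by $2w$, apply Alikhanov's inequality, control the nonlinear terms by $\sup|u_x|$ and $\sup|v_x|$ after integrating $v\,w\,w_x$ by parts, and conclude with Lemma~\ref{GRONWALL} and zero initial data. Your version is somewhat more careful than the paper's: the paper's "simple transformations" silently drop the $c\,w_{xx}$ and $k\,w$ contributions from $L$, which you absorb using $a^2\|w_{xx}\|^2$ and Young's inequality, and the paper does not address interchanging $\partial_{0t}^\beta$ with the $x$-integral.
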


\begin{proof} The existence of the solution has already been proved. So, we prove only the uniqueness of the solution.
 Let the problem has two solutions $u_1(x,t)$ and $u_2(x,t)$. We put $w=u_1-u_2$. 
We get the following equation for $w$
$$\partial _{0t}^{\beta }{w}+Lw=-wu_{2,x}-u_1{w_{x}}. \quad x\in \mathbb R, \ 0<t\leq T.$$
with initial condition $w(x,0)=0$, $x\in \mathbb R.$
We multiply the both sides of this equation by $2w$ and integrate in $\mathbb R$ with respect to $x$. After simple transformations we get
$$\partial _{0t} \|w\|^2\leq \gamma \|w\|^2 \sup_{x\in \mathbb R,t\in[0,T]} (|u_{1x}|+2|u_{2x}|).$$
So, $$\|w(\cdot, t)\|^2\leq \|w(\cdot, 0)\|^2 E_\beta\left(\gamma t^\beta\sup_{x\in \mathbb R,t\in[0,T]} (|u_{1x}|+2|u_{2x}|)  \right)=0.$$
From here, keeping in mind $\|w(\cdot, t)\|\in AC[0,T]$, we use Gronwall-Bellman inequality given in Lemma \ref{GRONWALL} we get $w=0$ and, so,  $u_1=u_2$. The proof is completed.
\end{proof}

\section{Conclusion}

 The primary goal of the present study is to prove the  unique solvability of the Cauchy problem for a generalized time-fractional Kuramoto-Sivashinsky equation in the Schwartz space of rapidly decreasing functions. First, the linear case of the equation was analyzed, and the solution was obtained using the Fourier transform, which allowed us to derive an explicit representation of the solution and study the main properties of the problem.

For the nonlinear part, the method of successive approximations was applied. A sequence of approximate solutions was constructed, and it was shown that this sequence converges in the corresponding semi-norms, which define the topology in the Schwartz space of rapidly decreasing functions. 
As a result, the local and global solvability of the Cauchy problem was established. In addition, the uniqueness of the solution in the appropriate functional space was proved.

\label{lastpage}


%
\end{document}